\newcommand {\R}{\mathbb{R}}
\newcommand{\sgn}{\mathrm{sgn}}
\newcommand {\grad}{\nabla}
\newcommand{\matlab}{{\sc Matlab}}
\newcommand{\magenta}[1]{\textcolor{magenta}{#1}}
\newcommand{\beq}{\begin{equation}}
\newcommand{\eeq}{\end{equation}}
\newcommand{\beqf}{\begin{flalign}}
\newcommand{\eeqf}{\end{flalign}}
\newcommand{\RA}{\Rightarrow}
\newtheorem{theorem}                {Theorem}
\newtheorem{corollary}  [theorem]   {Corollary}
\newtheorem{lemma}      [theorem]   {Lemma}
\newcommand\numberthis{\addtocounter{equation}{1}\tag{\theequation}}
\newlength{\hatchspread}
\newlength{\hatchthickness}
\newlength{\hatchshift}
\newcommand{\hatchcolor}{}
\tikzset{hatchspread/.code={\setlength{\hatchspread}{#1}},
	hatchthickness/.code={\setlength{\hatchthickness}{#1}},
	hatchshift/.code={\setlength{\hatchshift}{#1}},% must be >= 0
	hatchcolor/.code={\renewcommand{\hatchcolor}{#1}}}
\tikzset{hatchspread=3pt,
	hatchthickness=0.4pt,
	hatchshift=0pt,% must be >= 0
	hatchcolor=black}
\title{Analysis of Limited-Memory BFGS on a Class of Nonsmooth Convex Functions}
\author{Azam Asl\thanks{Courant Institute of Mathematical Sciences, New York University.
		Supported by a grant from the Simons Foundation (417314,MHW).}
	\and 
	Michael L.~Overton\thanks{Courant Institute of Mathematical Sciences, New York University. Supported in
		part by National Science Foundation Grant DMS-1620083.}
}
\date{\today}
\begin{document}
	\maketitle

\begin{abstract}
The limited memory BFGS (L-BFGS) method is widely used for large-scale unconstrained optimization, but its behavior on nonsmooth problems has received little attention. L-BFGS can be used with or without ``scaling"; the use of scaling is normally recommended. A simple special case, when just one BFGS update is stored and used at every iteration, is sometimes also known as memoryless BFGS. We analyze memoryless BFGS with scaling, using any Armijo-Wolfe line search, on the function $f(x) = a|x^{(1)}| + \sum_{i=2}^{n} x^{(i)}$, initiated at any point $x_0$ with $x_0^{(1)}\not = 0$. We show that if $a\ge 2\sqrt{n-1}$, the absolute value of the normalized search direction generated by this method converges to a constant vector, and if, in addition, $a$ is larger than a quantity that depends on the Armijo parameter, then the iterates converge to a non-optimal point $\bar x$ with $\bar x^{(1)}=0$, although $f$ is unbounded below. As we showed in previous work, the gradient method with any Armijo-Wolfe line search also fails on the same function if $a\geq \sqrt{n-1}$ and $a$ is larger than another quantity depending on the Armijo parameter, but scaled memoryless BFGS fails under a \emph{weaker} condition relating $a$ to the Armijo parameter than that implying failure of the gradient method. Furthermore, in sharp contrast to the gradient method, if a specific standard Armijo-Wolfe bracketing line search is used, scaled memoryless BFGS fails when $a\ge 2 \sqrt{n-1}$ \emph{regardless} of the Armijo parameter. Finally, numerical experiments indicate that the results may 
extend to scaled L-BFGS with any fixed number of updates $m$, and to more general piecewise linear functions.
\end{abstract}
%%%%%%%%%%%%%%%%%%%%%%%%%%%%%%%%%%%%%%%%
%%%%%%%%%%%%%%%%%%%%%%%%%%%%%%%%%%%%%%%%

\section{Introduction}\label{sec:intro}
The limited memory BFGS (L-BFGS) method is widely used for large-scale unconstrained optimization, but its behavior on nonsmooth problems has received little attention. In this paper we give the first analysis of an instance of the method, sometimes known as memoryless BFGS with scaling,
 on a specific class of nonsmooth convex problems, showing that under given conditions the method generates iterates whose function values are
 bounded below, although the function itself is unbounded below.

The ``full" BFGS method \cite[Sec.~6.1]{NW06},  independently derived by Broyden, Fletcher, Goldfarb and Shanno in 1970, is remarkably effective for unconstrained
optimization, 
but even when the minimization objective $f:\R^n\rightarrow \R$ is assumed to be twice continuously differentiable and convex,
with bounded level sets, the analysis of the method is nontrivial. 
Powell \cite{POW76b} gave the first convergence analysis for full BFGS using an Armijo-Wolfe line search for this class of functions,
establishing convergence to the minimal function value.
In the smooth, nonconvex case it is generally accepted that the method is very reliable for finding stationary points (usually local minimizers), although pathological counterexamples exist \cite{DAI02,MAS04}. 

At first glance, it might appear that, since BFGS uses gradient differences to approximiate information about the Hessian of $f$,
the use of BFGS for nonsmooth optimization makes little sense: first, because at minimizers where $f$
is not differentiable, neither the gradient nor the Hessian exists; and secondly, even at other points where $f$ is twice differentiable,
the Hessian might appear to be meaningless: for example, for a piecewise linear function such as studied in this paper, the Hessian
is zero everywhere that it is defined.  However, the way to make sense of the applicability of BFGS to a nonsmooth function is to consider
its approximation by a very ill-conditioned smooth function. For example, the function $f(x)=\|x\|_2$ can be 
arbitrarily well approximated by the smooth function $f(x)=\sqrt{\|x\|_2^2 + \epsilon^2}$, where $\epsilon>0$. As $\epsilon\downarrow 0$, the approximation
becomes arbitrarily good --- but also arbitrarily ill-conditioned. For any \emph{fixed} $\epsilon>0$, the BFGS convergence theory applies. As $\epsilon \downarrow 0$, it is not at all clear what impact the property of good approximation via badly conditioned functions has on the convergence theory,
which, of course, does not apply when $\epsilon=0$.
Nonetheless, even for $\epsilon=0$, the method remains well defined, as the gradient is defined everywhere
except at the minimizer (the origin). In fact, it was established recently by 
Guo and Lewis \cite{GL18} that Powell's result for smooth functions mentioned above can be extended, in a nontrivial way, to
show that the iterates generated by BFGS with an Armijo-Wolfe line search, when applied to $f(x)=\|x\|_2$,
converge to the origin. Even the case $n=1$, where $f$ is the absolute value function, is surprisingly complex; 
it turns out that in this case the sequence of iterates is defined by a certain binary expansion of the starting point \cite{LO13}. 
However, in this simple example it is easy to see intuitively \emph{why} BFGS works well. The line search ensures that
the iterates oscillate back and forth across the origin, giving a gradient difference equal to 2 at every iteration. As the iterates
converge to the origin, the result is that the ``inverse Hessian approximation'' generated by BFGS converges to zero, 
resulting in quasi-Newton steps that also converge to zero. An important consequence is that the line search
never requires many function evaluations. In contrast, when gradient descent with the same line search is applied to the absolute value function,
the iterates converge to the origin, but each line search requires a number of function evaluations that increases with the iteration number.

More generally, if $f$ is locally Lipschitz, BFGS is still typically
well defined, because such functions are differentiable almost everywhere by Rademacher's theorem \cite{Cla90},
and hence $f$ is differentiable at a randomly generated point with probability one.  Furthermore, substantial computational experience \cite{LO13}
shows that even when $f$ is nonsmooth, the method is remarkably reliable for finding Clarke stationary points (again, typically
local minimizers), and furthermore, this property extends in a certain sense to constrained problems \cite{CurMitOve17}.
Indeed, no non-pathological counterexamples showing convergence to non-stationary values, 
meaning in particular examples where the starting point is not predetermined but generated randomly, are known.
The superlinear convergence rate that holds generically for smooth functions is not attained in the nonsmooth case;
instead, full BFGS is observed to converge linearly, in a sense described in \cite{LO13}, on nonsmooth functions.
Furthermore, in general one does not observe the inverse Hessian approximation converging to zero;
instead, what seems to be typical is that \emph{some} of its eigenvalues converge to zero, with corresponding eigenvectors identifying directions along which
$f$ is nonsmooth at the minimizer. See \cite[Sec. 6.2]{LO13} for details.

The full BFGS method maintains and updates an approximation to the inverse (or a factorization) of the Hessian matrix $\grad^2 f(x)$
at every iteration, defined by current known gradient difference information $y_{k-1}=\grad f(x_k)-\grad f(x_{k-1})$ along with  $s_{k-1}=x_{k}-x_{k-1}$.
The use of the Wolfe condition in the line search, requiring an increase in the directional derivative of $f$ along the descent direction
generated by BFGS, ensures that the updated inverse Hessian approximation is positive definite.
Since the update has rank two, the cost of full BFGS is $O(n^2)$ operations per iteration.
While this was a great advance over the cost of Newton's method in the 1970s, already in the 1980s it was realized that the cost was too
high for problems where $n$ is large, 
and hence the limited memory version, L-BFGS, became popular, and is widely used today
(see \cite{AM15,DL11,HarchaouiXX}, for example).
 The standard version of L-BFGS was introduced by Liu and Nocedal \cite{LN89} and is also discussed in detail in \cite[Sec.~7.2]{NW06}.
Let $m \ll n$ be given.
Instead of maintaining an approximation to the inverse Hessian, at the $k$th iteration a proxy for this matrix is 
implicitly defined by application of
the most recent $m$ BFGS updates (which are defined by saving $y_j$ and $s_j$ from the past $m$ iterations)
to a given sparse matrix $H_k^0$. One possible choice for $H_k^0$ is the identity matrix $I$,
but a popular choice is to instead use \emph{scaling}, defining 
\beq
               H_k^0 = \frac{s_{k-1}^T y_{k-1}}{y_{k-1}^T y_{k-1}} I.    \label{Hk0def}
 \eeq

Analysis of L-BFGS is more straightforward than analysis of full BFGS in the case that $f$ is smooth and strongly convex,
and is given in \cite[Theorem 7.1]{LN89}, where linear convergence to minimizers is established, regardless of whether scaling is used
or not. Furthermore, it is stated in \cite{LN89} 
that scaling greatly accelerates L-BFGS, and this seems to be the currently accepted wisdom. However, we
show in this paper that it is exactly the choice of scaling that may result in failure of L-BFGS on a specific class of nonsmooth functions. This situation is
in sharp contrast to our experience with full BFGS on nonsmooth functions, where the same algorithm that is normally used for smooth
functions works well also on nonsmooth functions. 

We consider the convex function
\beq \label{fdef}
         f(x) = a|x^{(1)}| + \sum_{i=2}^{n} x^{(i)},
\eeq
where $a \geq \sqrt{n-1}$.
Note that although $f$ is unbounded below, it is bounded below along the line defined by the negative gradient direction from any
point $x$ with $x^{(1)}\not = 0$. In \cite{AO18}
we analyzed the gradient method with \emph{any} Armijo-Wolfe line search applied to \eqref{fdef}. 
We showed that
if % $a \geq \sqrt{n-1}$ and
	\beq \label{gradient-armijo_cond}
	a > \sqrt{(\frac{1}{c_1} -1)(n-1)},
	\eeq
where $c_1$ is the Armijo parameter, the gradient method, initiated at \emph{any} point
$x_0$ with $x_0^{(1)}\not = 0$, fails in the sense
that it generates a sequence converging to a non-optimal point $\bar x$ with $\bar x^{(1)}=0$, although $f$ is unbounded below. In the
present paper, we analyze
scaled L-BFGS with $m=1$, i.e., with just one update --- a method sometimes known as \emph{memoryless} BFGS \cite[p.~180]{NW06}
--- applied to the function \eqref{fdef}, and identify conditions under which the method converges to  non-optimal points (more details are given in the next paragraph).
In contrast, it is known that when \emph{full} BFGS is applied to the same function,
eventually the method generates a search direction on which $f$ is unbounded below \cite{XW17}; see also 
\cite{LZ15}.
The specific choice of objective function $f$ offers two advantages: one is its simplicity, but another is that there is little difficulty distinguishing in practice whether the method ``succeeds'' or ``fails'' from a given starting point: success is associated with a sequence of function values that is unbounded below, while convergence of the sequence to a finite value implies failure.

The paper is organized as follows. In \S \ref{sec:memBFGS}, we define the scaled memoryless BFGS method, using any
line search satisfying the Armijo and Wolfe conditions, and derive
some properties of the method applied to the function $f$ in \eqref{fdef}, initiated at any point $x_0$ with $x_0^{(1)}\not = 0$. In \S\ref{subsec:AWexist}, we show that if $a\geq \sqrt{3(n-1)}$, the algorithm is well defined in the sense
that Armijo-Wolfe steplengths always exist, deferring the technical details to Appendix~\ref{appendA}. Then in
\S \ref{sec:theory}, we give our main theoretical results. First, in \S 
\ref{subsec:maincondition}, we show that  if $a\ge 2\sqrt{n-1}$, in the limit the absolute value of the normalized search direction 
generated by the method converges to a constant vector, deferring the most technical parts of the proof to 
Appendix~\ref{appendB}.
Then, in \S \ref{subsec:armijo}, we show that if
$a$ further satisfies a condition depending on the Armijo parameter, the method converges
to a non-optimal point $\bar x$ with $\bar x^{(1)} = 0$. 
Furthermore, this condition is \emph{weaker} than the corresponding condition \eqref{gradient-armijo_cond} for the
gradient method. Then, in \S \ref{subsec:specificLS},
we show that, if a specific standard Armijo-Wolfe bracketing line search is used,
scaled memoryless BFGS converges to a non-optimal point
when $a\ge 2\sqrt{n-1}$ \emph{regardless} of the Armijo parameter. This is in sharp contrast 
to the gradient method using the same line search, for which success or failure on the function $f$ depends 
on the Armijo parameter.
In \S \ref{sec:expts} we present
 some numerical experiments which support our theoretical results, and which indicate that the results may
extend to scaled L-BFGS with any fixed number of updates $m$, and to more general piecewise linear functions. We make some concluding remarks in 
\S \ref{sec:conclude}.

%%%%%%%%%%%%%%%%%%%%%%%%%%%%%%%%%%%%%%%%%%%%%%%%%%%%%%%%%%%%
%%%%%%%%%%%%%%%%%%%%%%%%%%%%%%%%%%%%%%%%%%%%%%%%%%%%%%%%%%%%

\section{The Memoryless BFGS Method}\label{sec:memBFGS}
First let $f$ denote any locally Lipschitz function mapping $\R^n$ to $\R$, and let $x_{k-1} \in \R^n$ denote the $(k-1)$th iterate of an optimization algorithm
where $f$ is differentiable at $x_{k-1}$ with gradient $\grad f(x_{k-1})$. 
Let $d_{k-1}\in\R^n$
denote a descent direction, i.e., satisfying $\grad f(x_{k-1})^T d_{k-1} < 0$.
Let parameters $c_1$ and $c_2$, known as the Armijo and Wolfe parameters, satisfy  $0<c_1<c_2< 1 $.
We say that the steplength $t$ satisfies the Armijo condition at iteration ${k-1}$ if 
\beq
f(x_{k-1} + t d_{k-1}) \leq f(x_{k-1}) + c_1 t \grad f(x_{k-1})^T d_{k-1} \label{armijo_cond}
\eeq
and that it satisfies the Wolfe condition if 
\beq
\nabla f(x_{k-1} + t d_{k-1}) \text{ exists with }  \grad f(x_{k-1} + t d_{k-1})^T d_{k-1} \geq c_2 \grad f(x_{k-1})^T d_{k-1}. \label{wolfe_cond}
\eeq
It is known that if $f$ is smooth or convex, and bounded below along the direction $d_{k-1}$,
a point satisfying these conditions must exist (see \cite[Theorem 4.5]{LO13} for weaker conditions on $f$ for which this holds). 
Note that as long as $f$ is differentiable at the initial iterate, defining subsequent
iterates by $x_{k}=x_{k-1}+t_{k-1} d_{k-1}$, where \eqref{wolfe_cond} holds for $t=t_{k-1}$, ensures that $f$ is differentiable at $x_k$.

We are now ready to define the memoryless BFGS method (L-BFGS with $m=1$), also known as L-BFGS-1, with scaling, i.e.,
with $H_k^0$ defined by \eqref{Hk0def}. The algorithm is defined for any $f$, but its analysis will be specifically for \eqref{fdef}.
\begin{flalign*}
&\mathbf{Algorithm~1~(Memoryless~BFGS~with~scaling),~with~input~x_0} &&\\
&~~~d_0= -\nabla f(x_0)  \numberthis\label{d0}&&\\
&~~~\mathbf{for~~} k=1,2,3,\ldots, \mathbf{define} &&\\
&~~~~~~~~~~t_{k-1} = t\mathrm{~satisfying~\eqref{armijo_cond}~and~ \eqref{wolfe_cond}}&&\\
&~~~~~~~~~~x_k=x_{k-1} +t_{k-1}d_{k-1}&& \numberthis \label{update}&&\\
&~~~~~~~~~~s_{k-1}  = x_k - x_{k-1}   \numberthis\label{sk}&&\\
&~~~~~~~~~~y_{k-1} = \nabla f(x_{k}) - \nabla f(x_{k-1}) \numberthis \label{yk}&&\\
&~~~~~~~~~~V_{k-1} = I -  \dfrac{y_{k-1}s_{k-1}^T}{y^T_{k-1}s_{k-1}} \numberthis\label{Vk} &&\\
&~~~~~~~~~~H_k = \dfrac{s_{k-1}^Ty_{k-1}}{y^T_{k-1}y_{k-1}}V_{k-1}^TV_{k-1} +\dfrac{s_{k-1}s^T_{k-1}}{s^T_{k-1}y_{k-1}} \numberthis\label{hknw} &&\\
&~~~~~~~~~~d_k=-H_k\grad f(x_k) \numberthis\label{dir}&&\\
&~~~\mathbf{end~~}&&\\
\end{flalign*} 
Let us adopt the convention that if no steplength $t$ exists satisfying the Armijo and Wolfe conditions \eqref{armijo_cond} and \eqref{wolfe_cond}, the algorithm is terminated.  Hence, for any smooth or convex function, termination implies that a direction $d_{k-1}$ has been
identified along which $f(x_{k-1}+t d_{k-1})$ is unbounded below.

Now let us restrict our attention to the convex function $f$ given in \eqref{fdef}.  The question we address in this paper
is whether memoryless BFGS will succeed in identifying the fact that $f$ is unbounded below, either because it generates a direction $d$ for
which no steplength $t$ satisfying the Armijo and Wolfe conditions exists (in which case the algorithm terminates), or, alternatively, that it generates a 
sequence $\{x_k\}$ for which Armijo-Wolfe steps always exist, with $f(x_k)\downarrow -\infty$.  If neither event takes place, $\{f(x_k)\}$ is bounded below,
which is regarded as failure, since $f$ is unbounded below.

For the function \eqref{fdef}, requiring $t_{k-1}$ to satisfy \eqref{wolfe_cond}, regardless of the value of the Wolfe parameter $c_2\in (0,1)$,
is, via \eqref{update}, equivalent to the condition
\beq \sgn(x^{(1)}_{k}) = -\sgn(x^{(1)}_{k-1}). \label{sgn}\eeq
Via \eqref{sk} we see that \eqref{sgn} is equivalent to the condition
\beq\label{sgnxk_1}
|s^{(1)}_{k-1}| = |x^{(1)}_{k-1}| + |x^{(1)}_k|.
\eeq
Without loss of generality, we assume that the initial point $x_0$ has a positive first component, i.e., \mbox{$x_0^{(1)} > 0$}, so that
\beq \nabla f(x_{k}) =  \left[\begin{array}{c} (-1)^k a  \\ \mathbb{1}\end{array}\right], \label{gradf} \eeq
where $\mathbb{1} \in \R^{n-1}$ is the column vector of all ones.  
Via \eqref{sgn} and \eqref{gradf}, \eqref{yk} is simply
\beq\label{yk1}
y_{k-1}= \left[\begin{array}{c} (-1)^k 2a \\ \mathbb{0} \end{array}\right],
\eeq
where $ \mathbb{0} \in \R^{n-1}$ is the column vector of all zeros.
Note that from \eqref{update} and \eqref{sk} it is immediate that for any $k\ge 1$
\beq\label{std}
s_{k-1}=t_{k-1}d_{k-1}.
\eeq
For $i=2,\hdots,n$, let 
$$  \theta^{(i)}_{k-1}  = \arctan\left (\dfrac{d_{k-1}^{(i)}}{d_{k-1}^{(1)}}\right) ,
$$
with $\theta_{k-1}^{(i)} \in[-\pi/2,\pi/2]$. Note that	$|\theta_{k-1}^{(i)}|$ is 
the acute angle between $d_{k-1}$ and the $x^{(1)}$ axis when it is projected onto the $(x^{(1)}, x^{(i)})$ plane.
	% Positive angles are shown as counter-clockwise signed arcs (OK?) and negative angles as clockwise.} 
	 From \eqref{d0} and \eqref{gradf} we have
	\beq 
	 \frac{1}{a} = \tan \theta_0^{(2)} = \tan \theta_0^{(3)} = \hdots = \tan \theta_0^{(n)}. \label{b0}
	\eeq 
	 The assumption of the initial inverse Hessian approximation being a multiple of the identity is embedded in the definition \eqref{hknw}, and therefore we know that $d_{k-1}$ (and consequently $s_{k-1}$) is in the subspace spanned by the two gradients in \eqref{gradf} (see \cite[Lemma 2.1]{rh03}). Since both gradients are symmetric w.r.t. the components $x^{(2)},\hdots, x^{(n)}$, it follows that $d_{k-1}$ has the same property. The same symmetry holds in the definition of the objective function \eqref{fdef}. Since \eqref{b0} holds, we conclude inductively that, for $k>1$,
	$
	\tan \theta_{k-1}^{(2)} = \tan \theta_{k-1}^{(3)} = \hdots = \tan \theta_{k-1}^{(n)}. 
	$
	So, let us simply write
	\beq\label{bi}
	b_{k-1} = \tan \theta_{k-1} = \dfrac{d_{k-1}^{(i)}}{d_{k-1}^{(1)}} = \dfrac{s_{k-1}^{(i)}}{s_{k-1}^{(1)}}, \mathrm{~~for ~all~ } i=2,\hdots,n.
	\eeq	 
%%%%%%%%%%%%%%%%%%%%%%%%%%%%%%%%%%%%%%%%%%%%%%%%%%%%%%%%%%%%%%%%%%%%%%%%%%%%%%%%%%%%
From \eqref{yk1} we have %~ \mathrm{and} ~ y^T_{k-1}y_{k-1} = 4a^2 
	\beq \label{ys} 
	s^T_{k-1}y_{k-1} = (-1)^k 2as_{k-1}^{(1)},
	\eeq 
	so we can rewrite $V_{k-1}$ in \eqref{Vk}  in terms of $b_{k-1}$ as 
	\beq\label{Vkb}
	V_{k-1} =
	\left[
	\begin{array}{c|c}
		0 & -b_{k-1}\mathbb{1}^T\\
		\hline
		\mathbb{0} & I_{{n-1}}
	\end{array}
	\right].
	\eeq
This leads us to write $H_{k}$ in \eqref{hknw} as
	$$H_{k} =\dfrac{s_{k-1}^Ty_{k-1}}{y^T_{k-1}y_{k-1}} \left[
	\begin{array}{c|c}
	0 & \mathbb{0}^T\\
	\hline
	\mathbb{0} & b_{k-1}^2 \mathbb{1}\mathbb{1}^T+I_{{n-1}   }
	\end{array}
	\right]+  
	\dfrac{(s_{k-1}^{(1)})^2}{s^T_{k-1}y_{k-1}}\left[
	\begin{array}{c|c}
	1 & b_{k-1}\mathbb{1}^T \\
	\hline
	b_{k-1}\mathbb{1}& b_{k-1}^2 \mathbb{1}\mathbb{1}^T
	\end{array}
	\right].
	$$
	From \eqref{ys} we can see that the fractions in front of the first and second matrices are the same, i.e.,
	\beq \label{gam_abs}%{scale2}
	\dfrac{s_{k-1}^Ty_{k-1}}{y^T_{k-1}y_{k-1}}= \dfrac{(s_{k-1}^{(1)})^2}{s^T_{k-1}y_{k-1}} =\dfrac{|s^{(1)}_{k-1}|}{2a}.
	\eeq
	Hence, we obtain the following much more compact form
	\beq\label{hkgen}
	H_{k} =\gamma_k  \left[
	\begin{array}{c|c}
		1 & b_{k-1}\mathbb{1}^T \\
		\hline
		b_{k-1}\mathbb{1}& 2b_{k-1}^2 \mathbb{1}\mathbb{1}^T+I_{n-1} 
	\end{array}
	\right] ,
	\eeq
	where 
	\beq \label{gammadef}
	\gamma_k =  \dfrac{|s^{(1)}_{k-1}|}{2a} 
	\eeq
	is the scale factor in \eqref{Hk0def}.
	Finally, with the gradient defined in \eqref{gradf} we can compute the direction generated by Algorithm~1 in \eqref{dir} as
\beq\label{dkn}
d_k= -\dfrac{|s^{(1)}_{k-1}|}{2a}
\left[\begin{array}{c}  
	(-1)^k a + (n-1)b_{k-1} \\ \Big((-1)^k ab_{k-1} + 2(n-1)b_{k-1}^2+1\Big)\mathbb{1}
\end{array}\right]. 
\eeq

So, from definition \eqref{bi} we can write $b_k$ recursively as 
\beq\label{bk}
b_k = \dfrac{(-1)^k ab_{k-1} + 2(n-1)b_{k-1}^2+1}{(-1)^k a + (n-1)b_{k-1}}.
\eeq   
%%%%%%%%%%%%%%%%%%%%%%%%%%%%%%%%%%%%%%%%%%%%%%%%%%%%%%%%%%%%
%%%%%%%%%%%%%%%%%%%%%%%%%%%%%%%%%%%%%%%%%%%%%%%%%%%%%%%%%%%%
\subsection{Existence of Armijo-Wolfe Steps when 
		$\sqrt{3(n-1)}\leq a$}\label{subsec:AWexist}
In the next lemma we prove that if $ \sqrt{3(n-1)} \le a$, then the $\{b_k\}$ alternate in sign
with $|b_k|\leq 1/a$.% \footnote{\magenta{I moved the part about $\theta_k$ to lower down.} OK.}
%%%%%%%%%%%%%%%%%%%%%%%%%%%%
%%%%%%%%%%%%%%%%%%%%%%%%%%%%
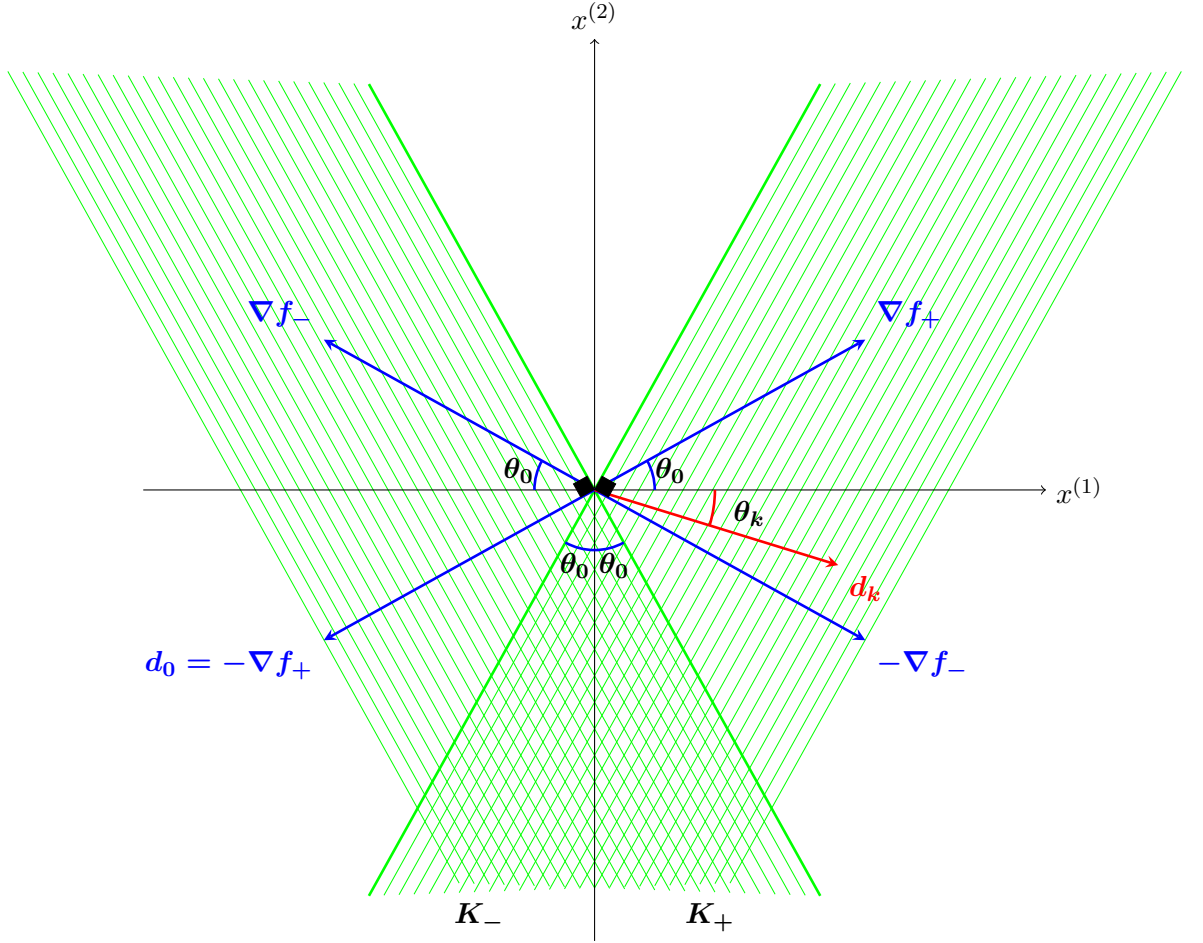
\begin{figure}
	\centering
\begin{tikzpicture}[scale=2]
\usetikzlibrary{patterns}
\newcommand{\x}{3cm}
%\draw[thin,gray!10] (-5,-5) grid (5,5);
\newcommand{\g}{1.8cm}
\newcommand{\sq}{0.05cm}
%NEED TO BE CHANGED if you change \g: \sqq =\sq*(\g)
\newcommand{\sqq}{0.09cm}
\coordinate (origin) at (0, 0);
\coordinate [label={above right:$\boldsymbol{K_{-}}$}] (K_) at (-1, -3);
\coordinate [label={above left:$\boldsymbol{K_{+}}$}] (K+) at (+1, -3);
\coordinate (x) at (\x,0);
\coordinate (mx) at(-\x,0);
\coordinate (y) at (0,\x);
\coordinate (my) at (0,-\x);
\coordinate (fp) at (\g,1);
\coordinate (mfp) at (-\g,-1);
\coordinate (mfm) at (\g,-1);
\coordinate (mf) at (-\g,1);

\def\n{24};

\def\xo{1.5}; 
\def\xoo{-1.5};
\def\yo{-1.5*\g};
\def\yoo{1.5*\g};
\def\xq{-1.5}; 
\def\xqo{1.5};
\def\yq{-1.5*\g};
\def\yqo{1.5*\g};

\coordinate (pmr) at (\xo,\yo);%(1.5, -1.5*\g);
\coordinate (mpr) at (\xoo,\yoo);%(-1.5, 1.5*\g);
\coordinate (mmr)  at (\xq,\yq);
\coordinate (ppr) at (\xqo,\yqo);

\draw[line width=1pt,green](pmr)--(mpr) ; % orth to negative grad with positive a
\foreach \x in {1,..., \n}{
	\def\xn{{ -.1*\x -\xo}};  
	\def\yn{ { .1*\x - \yo} }; 
	
	\def\xnn{{ -.1*\x -\xoo}};  
	\def\ynn{ { .1*\x - \yoo } }; 
	
	\draw [green,line width=0mm] ({\xn},{\yn}) -- ({\xnn},{\ynn});
}

\draw[line width=1pt,green](mmr)--(ppr) ; % orth to negative grad with negative a
\foreach \x in {1,..., \n}{
	\def\xn{{ .1*\x -\xq}};  
	\def\yn{ { .1*\x - \yq} }; 
	
	\def\xnn{{ .1*\x -\xqo}};  
	\def\ynn{ { .1*\x - \yqo } }; 
	
	\draw [green,line width=0mm] ({\xn},{\yn}) -- ({\xnn},{\ynn});
}

\coordinate (d) at (.9*\g,-0.5);
\coordinate (xk) at (0, .05*\x);

\draw[->] (mx)-- (x) node[right]{$x^{(1)}$};
\draw[->] (my)--(y) node[above]{$x^{(2)}$};

\draw[line width=1pt,blue,-stealth](origin)--(fp) node[anchor=south west]{$\boldsymbol{\nabla f_+ }$};
\draw[line width=1pt,blue,-stealth](origin)--(mfp) node[anchor=north east]{$\boldsymbol{d_0=-\nabla f_+}$};
\draw[line width=1pt,blue,-stealth](origin)--(mf) node[anchor=south east]{$\boldsymbol{\nabla f_- }$};
\draw[line width=1pt,blue,-stealth](origin)--(mfm) node[anchor=north west]{$\boldsymbol{-\nabla f_-}$};
\draw[line width=1pt,red,-stealth](origin)--(d) node[anchor=north west]{$\boldsymbol{d_{k}}$};

\draw[fill] (0,0) -- ++ (\sq, \sqq) -- ++ (\sqq, -\sq) -- ++ (-\sq, -\sqq);

\draw[fill] (0,0) -- ++ (-\sq,\sqq) -- ++ (-\sqq,-\sq) -- ++ (\sq,-\sqq) ;

\draw[line width=1pt] pic["$\boldsymbol{\theta_0}$", draw=blue, -, angle eccentricity=1.3, angle radius=.8cm] {angle =x--origin--fp};

\draw[line width=1pt] pic["$\boldsymbol{\theta_0}$", draw=blue, -, angle eccentricity=1.3, angle radius=.8cm] {angle =mf--origin--mx};

\draw[line width=1pt] pic["$\boldsymbol{\theta_0}$", draw=blue, -, angle eccentricity=1.3, angle radius=.8cm] {angle =mmr--origin--my};

\draw[line width=1pt] pic["$\boldsymbol{\theta_0}$", draw=blue, -, angle eccentricity=1.3, angle radius=.8cm] {angle =my--origin--pmr};

\draw[line width=1pt] pic["$\boldsymbol{\theta_{k}}$", draw=red, -, angle eccentricity=1.3, angle radius=1.6cm, ] {angle =d--origin--x};

\end{tikzpicture}
\caption{{\bf Angles of Search Directions.} Let $n=2$, 
		let  $\grad f_+ = [a~1]^T$ and let $\grad f_- = [-a~1]^T$, so, since $x_0^{(1)} > 0$ by assumption, we have $d_0=-\grad f_+$.
	    It follows from Lemma \ref{lemma1} that $b_k = d_{k}^{(2)}/d_{k}^{(1)}$
	    alternates in sign for $k=1,2,\hdots$, with absolute value bounded above by $1/a$,
	    and hence that $\theta_{k} = \arctan(b_k)$ alternates in sign for $k=1,2,\hdots$, with
	    $|\theta_k|$, the acute angle between the $x^{(1)}$ axis and the search direction $d_{k}$, bounded above by $\theta_0$. Furthermore,  
		Lemma \ref{genral_dir} states that the function $f$ is unbounded below along all directions
		in the open cones $K_-$ and $K_+$, and bounded below along all other directions (except the vertical axis).  Note, however, that points satisfying the Wolfe condition exist along directions $d\in K_+$ emanating  from iterates on the left side of the $x^{(2)}$ axis, but not along directions $d\in K_-$ emanating from the left side, because the former cross the $x^{(2)}$ axis and the latter do not, and vice versa. %For directions emanating from $x_{k-1}$, 
	Finally, Theorem \ref{sp_dir} implies that, under the assumption $a\geq \sqrt{3}$, we have $|\theta_{k}| \le \theta_0 \le \pi/6 $, for all $k > 0$ (see the discussion after
		the theorem), so $d_{k}$ does not lie in $K_-$ or in $K_+$ and hence the algorithm does not terminate.
}\label{fig:plane}\end{figure}
\begin{lemma} \label{lemma1}
	Suppose $ \sqrt{3(n-1)} \le a$ . Define $b_k$ as in  \eqref{bk}  with $b_0=1/a$. Then $|b_k| \le 1/a$ and furthermore $\{b_k\}$ 
	alternates in sign with
	\beq\label{abk}
	|b_k| = \dfrac{1+(n-1)b_{k-1}^2}{a - (n-1)|b_{k-1}|} -|b_{k-1}|.
	\eeq
\end{lemma}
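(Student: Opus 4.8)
The plan is to prove the two assertions --- the sign alternation and the bound $|b_k|\le 1/a$ --- simultaneously by induction on $k$, since each feeds the other. Because $b_0=1/a>0$, the alternation claim amounts to $\sgn(b_k)=(-1)^k$, equivalently that $(-1)^k b_k>0$ for every $k$. I would therefore take as induction hypothesis that $\sgn(b_{k-1})=(-1)^{k-1}$ and $|b_{k-1}|\le 1/a$, and first rewrite the recursion \eqref{bk} in terms of $|b_{k-1}|$ alone. Substituting $b_{k-1}=(-1)^{k-1}|b_{k-1}|$ into \eqref{bk} makes the numerator parity-free while turning the denominator into $(-1)^k\bigl(a-(n-1)|b_{k-1}|\bigr)$; multiplying through by $(-1)^k$ then gives
\beq
(-1)^k b_k = \frac{1-a|b_{k-1}|+2(n-1)|b_{k-1}|^2}{a-(n-1)|b_{k-1}|}.
\eeq
A one-line manipulation shows the right-hand side equals $\frac{1+(n-1)|b_{k-1}|^2}{a-(n-1)|b_{k-1}|}-|b_{k-1}|$, which is exactly \eqref{abk} once positivity is confirmed. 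The base case $b_0=1/a$ is immediate, and computing $b_1$, whose absolute value is $\frac{2(n-1)}{a(a^2-(n-1))}$, already shows that the threshold $a^2\ge 3(n-1)$ is precisely what forces $|b_1|\le 1/a$.

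The inductive step has three ingredients. First, the denominator is positive: from $|b_{k-1}|\le 1/a$ one gets $a-(n-1)|b_{k-1}|\ge (a^2-(n-1))/a>0$, using $a^2\ge 3(n-1)>(n-1)$ (here $n\ge 2$). Second, I would show the numerator $g(|b_{k-1}|)$ is positive, where $g(t)=2(n-1)t^2-at+1$. Since $g$ is an upward parabola with $g(0)=1$ and $g(1/a)=2(n-1)/a^2>0$, the only way it could dip to zero on $(0,1/a]$ is if its vertex $t^\ast=a/(4(n-1))$ lies in that interval; but $t^\ast<1/a$ forces $a^2<4(n-1)<8(n-1)$, making the discriminant $a^2-8(n-1)$ negative, so $g>0$ everywhere. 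Hence the numerator is positive in every case. Combined with the positive denominator, this yields $(-1)^k b_k>0$, which simultaneously gives $\sgn(b_k)=(-1)^k$ and identity \eqref{abk}, with $|b_k|$ equal to the displayed expression.

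The final ingredient is the bound $|b_k|\le 1/a$. Here I would clear the (positive) denominators in $|b_k|\le 1/a$ and reduce the inequality to $|b_{k-1}|\bigl(a^2-(n-1)-2a(n-1)|b_{k-1}|\bigr)\ge 0$. As $|b_{k-1}|\ge 0$, it suffices that $a^2-(n-1)\ge 2a(n-1)|b_{k-1}|$, and the induction hypothesis $|b_{k-1}|\le 1/a$ bounds the right-hand side by $2(n-1)$, so the inequality holds exactly when $a^2\ge 3(n-1)$, closing the induction. I expect this last step to be the crux: it is the only place where the quantitative hypothesis $a\ge\sqrt{3(n-1)}$ is genuinely used, and the value of $b_1$ shows it is tight, leaving no slack. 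The numerator-positivity case split is the part most likely to require care in the writeup, but it is not where the constraint on $a$ actually binds.
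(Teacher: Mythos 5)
Your proof is correct and takes essentially the same route as the paper's Appendix~\ref{appendA}: an induction that simultaneously maintains the sign pattern and the bound $|b_k|\le 1/a$, in which your quantity $(-1)^k b_k$ is precisely the paper's change of variable $\beta_k$, and your closing step (clearing the positive denominators and using $|b_{k-1}|\le 1/a$ together with $a^2\ge 3(n-1)$) is the same algebra as the paper's chain of implications. The only real difference is cosmetic: the parabola/discriminant case analysis for positivity of the numerator $1-a|b_{k-1}|+2(n-1)|b_{k-1}|^2$ is unnecessary, since the induction hypothesis $|b_{k-1}|\le 1/a$ already gives $1-a|b_{k-1}|\ge 0$, so the numerator is trivially positive.
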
 
\begin{proof}
	See Appendix \ref{appendA} for the proof.
\end{proof}
Putting \eqref{bk} and \eqref{abk} together we can rewrite \eqref{dkn} as 
\beq
\label{dksk_1}
d_k= - \dfrac{|s^{(1)}_{k-1}|}{2a}(a-(n-1)|b_{k-1}|)
\left[\begin{array}{c} 
	(-1)^k \\ |b_k|\mathbb{1}   
\end{array}\right].
\eeq
Before stating the main result of this section we give the following simple lemma.
%%%%%%%%%%%%%%%%%%%%%%%%%%%%
%%%%%%%%%%%%%%%%%%%%%%%%%%%%
\begin{lemma} \label{genral_dir}
Let $x\in\R^n$ be given, define
	\beq
	\label{beta_dir}
	 d_+= -\left[\begin{array}{c} 
		1 \\  \beta\mathbb{1}   
	\end{array}\right]  \mathrm{~~and~~}	d_-= -\left[\begin{array}{c} 
	-1 \\ \beta \mathbb{1}   
\end{array}\right] ,
	\eeq
	where $\beta > 0$, and define $f$ by \eqref{fdef}.  Let $d$ be either $d_+$ or $d_-$. Then
	$h(t) = f(x + t d)  -f(x) $ is unbounded below if and only if  $\dfrac{a}{n-1}< \beta$. 
\end{lemma}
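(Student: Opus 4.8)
The plan is to reduce the claim to a one-variable convexity argument by computing $h$ explicitly. The key observation is that both candidate directions share the same "tail": the components $2,\dots,n$ of $d_+$ and $d_-$ equal $-\beta$, so they contribute identically to $f(x+td)$, while the two directions differ only in the sign of the first component. Writing $d=-[\pm 1,\ \beta\mathbb{1}]^T$ and substituting into \eqref{fdef}, I would obtain
\[
 h(t) = a\bigl|x^{(1)} \mp t\bigr| - a\bigl|x^{(1)}\bigr| - (n-1)\beta\, t,
\]
where the upper sign corresponds to $d_+$ and the lower to $d_-$. In either case $h$ is a convex, piecewise-linear function of the single real variable $t$ with exactly one breakpoint (at the value of $t$ making the first coordinate of $x+td$ vanish), since $f$ is convex and its restriction to any line is convex.

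Next I would note that a convex piecewise-linear function of one real variable is unbounded below exactly when its slope as $t\to+\infty$ is negative or its slope as $t\to-\infty$ is positive. Here the latter never occurs: as $t\to-\infty$ the absolute-value term contributes slope $-a$ and the tail contributes $-(n-1)\beta$, giving a left slope $-a-(n-1)\beta<0$ (using $a,\beta>0$). Hence unboundedness below is governed solely by the slope as $t\to+\infty$. For both $d_+$ and $d_-$, as $t\to+\infty$ the first coordinate of $x+td$ has constant sign and the absolute-value term contributes slope $+a$ (because $|d^{(1)}|=1$), while the tail contributes $-(n-1)\beta$; thus the right asymptotic slope of $h$ equals $a-(n-1)\beta$, independent of $x$ and of which of $d_+,d_-$ is used.

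Finally I would conclude in both directions. If $\frac{a}{n-1}<\beta$ then $a-(n-1)\beta<0$, so $h(t)\to-\infty$ as $t\to+\infty$ and $h$ is unbounded below. If instead $\frac{a}{n-1}\ge\beta$ then the right slope is nonnegative while the left slope is negative, so the convex function $h$ decreases into its single breakpoint and is nondecreasing thereafter, hence attains its minimum at the breakpoint and is bounded below.

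I do not expect a genuine obstacle here: the argument is a routine one-variable convexity computation. The only points needing care are the bookkeeping of the absolute value across its single kink, the observation that the left asymptotic slope is always negative (so it cannot itself force unboundedness), and the verification that $d_+$ and $d_-$ produce the \emph{identical} right-slope condition, so that the stated criterion holds uniformly for $d$ equal to either direction.
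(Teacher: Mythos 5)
Your proposal is correct and takes essentially the same route as the paper: both start from the identical explicit formula $h(t)=a|x^{(1)}\mp t|-a|x^{(1)}|-(n-1)\beta t$ and decide unboundedness by the sign of $a-(n-1)\beta$. The only cosmetic difference is that you phrase the conclusion via the asymptotic slopes of a convex piecewise-linear function, whereas the paper sandwiches $h(t)$ between the two linear functions $(a-(n-1)\beta)t-2a|x^{(1)}|$ and $(a-(n-1)\beta)t$ and reads off the same dichotomy.
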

\begin{proof}
    We have
	$$h(t)=a|x^{(1)} \pm t| -a|x^{(1)}|-(n-1)\beta t .$$
   So,
	$$ \left(a-(n-1)\beta\right) t -2a|x^{(1)}| < h(t)  < \left(a-(n-1)\beta\right) t.$$
	The result follows.
\end{proof}
Note that stating that $h$ is unbounded below is not equivalent to saying that Armijo-Wolfe points do not exist along the direction $d$ emanating from $x$. Such points exist if and only
if the sign of $d^{(1)}$ is opposite to the sign of $x^{(1)}$.
%%%%%%%%%%%%%%%%%%%%%%%%%%%%
%%%%%%%%%%%%%%%%%%%%%%%%%%%%
\begin{theorem}\label{sp_dir} %When memoryless BFGS with scaling
When Algorithm 1 is applied to \eqref{fdef} with $ \sqrt{3(n-1)} \le a$, using any Armijo-Wolfe line search,   with any starting point $x_0$ such that $x_0^{(1)} \neq 0$,
	the method generates directions $d_k$ that are nonnegative scalar multiples of $d_+$ or $d_-$, defined in \eqref{beta_dir},
	with $\beta < a/(n-1)$. It follows that the steplength $t_k$ satisfying the Armijo and Wolfe conditions \eqref{armijo_cond}
	and \eqref{wolfe_cond} always exist and hence the method never terminates.%fails!
\end{theorem}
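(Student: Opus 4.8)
The plan is to read the structure of $d_k$ directly off equation \eqref{dksk_1} together with Lemma \ref{lemma1}, and then to establish existence of Armijo-Wolfe steplengths by tracking the sign of the first component of the iterates. First I would compare \eqref{dksk_1} with \eqref{beta_dir}: writing $c_k = \tfrac{|s^{(1)}_{k-1}|}{2a}\bigl(a-(n-1)|b_{k-1}|\bigr)$, equation \eqref{dksk_1} says $d_k = c_k d_+$ when $k$ is even and $d_k = c_k d_-$ when $k$ is odd, in both cases with $\beta = |b_k|$. I would then verify $c_k \ge 0$: Lemma \ref{lemma1} gives $|b_{k-1}| \le 1/a$, and the hypothesis $a \ge \sqrt{3(n-1)}$ forces $a^2 > n-1$, so $(n-1)|b_{k-1}| \le (n-1)/a < a$, whence $a-(n-1)|b_{k-1}| > 0$ while $|s^{(1)}_{k-1}|\ge 0$. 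The case $k=0$ is handled separately from \eqref{d0} and \eqref{gradf}, where $d_0 = a\,d_+$ with $\beta = b_0 = 1/a$. The same two facts give the bound on $\beta$: Lemma \ref{lemma1} yields $\beta = |b_k| \le 1/a$, and $a^2 > n-1$ gives $1/a < a/(n-1)$, hence $\beta < a/(n-1)$, as claimed.

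It then remains to show that an Armijo-Wolfe steplength always exists. Here I would resist arguing merely from Lemma \ref{genral_dir} that $f$ is bounded below along $d_k$ (which it is, since $\beta < a/(n-1)$), because, as the remark following that lemma warns, boundedness below is \emph{not} the criterion for existence of such steplengths; the correct criterion is that $d_k^{(1)}$ and $x_k^{(1)}$ have opposite signs. I would therefore run an induction establishing $\sgn(x_k^{(1)}) = (-1)^k$. The base case is $x_0^{(1)} > 0$ (assumed without loss of generality). For the inductive step, assume $x_k$ is well defined with $\sgn(x_k^{(1)}) = (-1)^k$; the form of $d_k$ above gives $\sgn(d_k^{(1)}) = (-1)^{k+1} = -\sgn(x_k^{(1)})$, so by the remark an Armijo-Wolfe steplength $t_k$ exists, and taking the step the Wolfe condition in the equivalent form \eqref{sgn} flips the sign, yielding $\sgn(x_{k+1}^{(1)}) = (-1)^{k+1}$ and closing the induction. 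Consequently the method never terminates.

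I expect the only genuinely delicate point to be this last step: recognizing that one must verify the opposite-sign condition rather than boundedness below, and that the signs of $d_k^{(1)}$ and $x_k^{(1)}$ have to be tracked jointly, with the Wolfe sign-flip \eqref{sgn} supplying the inductive mechanism. Everything else is bookkeeping, because the real analytic work, namely controlling the recursion \eqref{bk} so as to obtain $|b_k|\le 1/a$ with alternating signs, has already been carried out in Lemma \ref{lemma1} (whose proof is deferred to the appendix). As a consistency check one can also note that $H_k \succ 0$ by \eqref{hkgen}, so $d_k$ is a descent direction, which for directions of the form $d_\pm$ with $\beta < a/(n-1)$ is equivalent to the opposite-sign condition; but the inductive sign-tracking is the cleaner and more self-contained route and is the one I would present.
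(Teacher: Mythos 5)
Your proof is correct, and its core is the same induction as the paper's: read off from \eqref{dksk_1} that $d_k$ is the nonnegative multiple $c_k = \frac{|s^{(1)}_{k-1}|}{2a}\left(a-(n-1)|b_{k-1}|\right)$ of $d_+$ or $d_-$ with $\beta=|b_k|$, then invoke Lemma \ref{lemma1} to get $\beta \le 1/a < a/(n-1)$. The only divergence is the existence step, and there the paper argues exactly the way you chose to ``resist'': it concludes from Lemma \ref{genral_dir} that $f$ is bounded below along $d_k$ and deduces directly that an Armijo-Wolfe steplength exists. This is not a gap in the paper: the existence result quoted in \S\ref{sec:memBFGS} applies to a convex $f$ bounded below along a \emph{descent} direction, and $d_k$ is a descent direction (by positive definiteness of $H_k$, or directly from \eqref{dir_der}); moreover, for directions $d_\pm$ with $0<\beta<a/(n-1)$ emanating from $x_k$, descent is \emph{equivalent} to your opposite-sign condition, as you yourself observe in your closing consistency check. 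So your claim that one must track signs rather than argue via boundedness below is overstated; what is true is that boundedness below \emph{alone}, without descent, would not suffice, which is what the remark after Lemma \ref{genral_dir} warns about. What your variant buys is self-containedness: the alternation $\sgn(x_k^{(1)})=(-1)^k$, which in the paper is absorbed silently into the standing formulas \eqref{gradf}, \eqref{sgn} and \eqref{dksk_1} (all derived under the Wolfe sign-flip), is stated and proved explicitly, and existence of $t_k$ then follows from the elementary geometry in the remark rather than from the cited general theorem. Both routes are sound; the paper's is shorter, yours makes the mechanism by which the line search succeeds visible.
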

\begin{proof}
The proof is by induction on $k$. Without loss of generality assume $x_0^{(1)} > 0$, so $d_0=-\grad f(x_0)=a d_+$ with
	$\beta=1/a$. Since $ \sqrt{3(n-1)} \le a$, we have $1/a <a/(n-1) $ and hence the initial Armijo-Wolfe steplength $t_0$ exists by Lemma \ref{genral_dir}.
	Now, suppose that the result holds for all $j<k$, so
	  $d_k$ in \eqref{dksk_1} is well defined. Since by Lemma~\ref{lemma1} we know that $|b_{k-1}| \le 1/a \le a/(n-1) $,
	  the leading scalar in \eqref{dksk_1} is negative and therefore $d_k$ is a nonnegative scalar multiple 
	  of $d_+$ or $ d_-$ with $\beta = |b_k| \leq 1/a < a/(n-1)$. 
	  Hence $f$ is bounded below along the direction $d_k$ emanating from $x_k$ and so there exists $t_k$ satisfying the Armijo and Wolfe conditions at iteration $k$, which implies that the algorithm does not terminate at iteration $k$.
	%%%  is in the form of direction $d$ in Lemma \ref{genral_dir} and since  $|b_k| \le 1/a$, . 
\end{proof}
%%%%%%%%%%%%%%%%%%%%%%%%%%%%
%%%%%%%%%%%%%%%%%%%%%%%%%%%%
Using Figure \ref{fig:plane} we can provide an alternative informal geometrical proof for Theorem \ref{sp_dir}.  We have 
$$ \dfrac{1}{a} \le \dfrac{1}{\sqrt{3}}  ~~\RA~~ \theta_0 = \arctan \dfrac{1}{a} ~\le ~ \arctan \dfrac{1}{\sqrt{3}} = \dfrac{\pi}{6}.$$	
According to Lemma \ref{lemma1}, we have $ |b_{k}|\le 1/a$, and so, $|\theta_{k}| \leq \theta_0$  and hence, 
$$2\theta_0 + |\theta_{k}| \le \dfrac{\pi}{2}.$$
It follows (see Figure \ref{fig:plane}) that $d_{k} \notin K_+ \cup K_-$.   
This means that the method never generates a direction along which $f$ is unbounded below. 

However, Theorem \ref{sp_dir} does not imply that Algorithm 1 converges to a non-optimal point under the
assumption that $\sqrt{3(n-1)} \le a$, because the existence of Armijo-Wolfe steps $t_k$ for all $k$ does not
imply that the sequence $\{f(x_k)\}$ is bounded below. This issue is addressed in the next section.
%%%%%%%%%%%%%%%%%%%%%%%%%%%%%%%%%%%%%%%%%%%%%%%%%%%%%%%%%%%%%%%%%%%%%%%%%%%%%%%%%%%%
%%%%%%%%%%%%%%%%%%%%%%%%%%%%%%%%%%%%%%%%%%%%%%%%%%%%%%%%%%%%%%%%%%%%%%%%%%%%%%%%%%%%
\section{Failure of Scaled Memoryless BFGS}\label{sec:theory}
\subsection{Convergence of the Absolute Value of the Normalized Search Direction when $ 2\sqrt{n-1} \le a$}\label{subsec:maincondition}
Define 
\beq \label{b}
b= \dfrac{a-\sqrt{a^2-3(n-1)}}{3(n-1)}
\eeq
and note that when $ \sqrt{3(n-1)} \le a $, then
\[       \dfrac{1}{2a} \le b \le \dfrac{1}{a} \label{b-ineqs}. \]
Next we show the sequence $\{|b_k|\}$ converges to $b$
under a slightly stronger assumption. 
\begin{theorem}\label{conv}
	For $ 2\sqrt{n-1} \le a$ the sequence defined by \eqref{abk} converges and moreover
	$$\lim_{k \to \infty} |b_k| = b. $$ 
\end{theorem}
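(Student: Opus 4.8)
The plan is to recognize \eqref{abk} as a one–dimensional fixed–point iteration and to prove convergence by exhibiting a contraction. Writing $u_k := |b_k|$ and clearing the fraction in \eqref{abk}, the recursion becomes $u_k = g(u_{k-1})$ with
\[
  g(u) = \frac{2(n-1)u^2 - a u + 1}{a-(n-1)u}.
\]
Solving $g(u)=u$ gives the quadratic $3(n-1)u^2 - 2au + 1 = 0$, whose smaller root is exactly the number $b$ in \eqref{b}; the larger root $b' = \frac{a+\sqrt{a^2-3(n-1)}}{3(n-1)}$ satisfies $b' \ge 1/a$. By Lemma~\ref{lemma1} the whole orbit lies in $[0,1/a]$, and on this interval $a-(n-1)u \ge a - (n-1)/a > 0$, because $a \ge 2\sqrt{n-1}$ forces $(n-1)/a \le a/4$; hence $g$ is a well-defined $C^1$ (indeed rational) map there, and $b \in [\tfrac{1}{2a},\tfrac1a]$ by \eqref{b-ineqs}.

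The strategy is then the standard contraction argument: I will show that $\sup_{u\in[0,1/a]}|g'(u)| = L < 1$, so that the mean value theorem gives $|u_k - b| = |g(u_{k-1}) - g(b)| \le L\,|u_{k-1}-b|$, whence $u_k \to b$ geometrically, and in particular the limit is $b$ rather than $b'$. Differentiation yields
\[
  g'(u) = \frac{-2(n-1)^2u^2 + 4a(n-1)u - a^2 + (n-1)}{\bigl(a-(n-1)u\bigr)^2}.
\]
The two bounds $g'(u)<1$ and $g'(u)>-1$ each clear the positive denominator and reduce to a single quadratic inequality in $u$: namely $3(n-1)^2u^2 - 6a(n-1)u + 2a^2-(n-1) > 0$ for the upper bound, and $(n-1)u^2 - 2au - 1 < 0$ for the lower bound.

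The heart of the proof — and the part I expect to be most delicate — is verifying these two quadratic inequalities uniformly on $[0,1/a]$ using the hypothesis $a \ge 2\sqrt{n-1}$. For the lower bound, the parabola $(n-1)u^2-2au-1$ opens upward, equals $-1$ at $u=0$, and equals $(n-1)/a^2 - 3 < 0$ at $u=1/a$ (since $(n-1)/a^2 \le 1/4$), so it is negative throughout. For the upper bound, the parabola $3(n-1)^2u^2 - 6a(n-1)u + 2a^2-(n-1)$ has its vertex at $u=a/(n-1) > 1/a$, so on $[0,1/a]$ it is decreasing and attains its minimum at $u=1/a$, where it equals $\bigl(2a^4 - 7a^2(n-1) + 3(n-1)^2\bigr)/a^2$; this is positive precisely when $a^2 > 3(n-1)$, which holds with room to spare since $a^2 \ge 4(n-1)$. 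Compactness of $[0,1/a]$ and continuity of $g'$ then upgrade the pointwise strict inequality $|g'|<1$ to a uniform constant $L<1$, completing the argument.

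Finally, evaluating $g'$ at the fixed point shows $g'(b) \in (-1,0)$ — its sign is negative because $a^2 + 8a\sqrt{a^2-3(n-1)} \ge 20(n-1) > 15(n-1)$ under the hypothesis — which explains why $\{u_k\}$ approaches $b$ in an oscillatory rather than monotone fashion; an alternative route would be to pass to $g\circ g$ and show that the even- and odd-indexed subsequences converge monotonically to $b$, but the single contraction bound above is cleaner. It is worth noting that this argument in fact needs only the strict inequality $a > \sqrt{3(n-1)}$; the stronger hypothesis $a \ge 2\sqrt{n-1}$ in the statement simply guarantees a comfortable margin (and is what is required by the subsequent failure results).
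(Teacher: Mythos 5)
Your proof is correct, and it takes a genuinely different route from the paper's. The paper (Appendix~\ref{appendB}) works with the sign-flipped variables $\beta_k = |b_k|$ and argues without any calculus: it derives the two sign inequalities \eqref{inc} and \eqref{inc_dec}, concludes that the even- and odd-indexed subsequences of $\{\beta_k\}$ are bounded and monotone (decreasing and increasing, respectively), hence convergent; it then shows $|\beta_{k+1}-\beta_k|\to 0$ (via \eqref{k2k} and \eqref{eqPhi}), forcing the two subsequential limits to coincide; finally it identifies the common limit as the smaller root of $3(n-1)u^2-2au+1=0$, excluding the larger root because it exceeds $1/a$. This is essentially the ``pass to $g\circ g$ and extract monotone subsequences'' alternative that you set aside, executed algebraically. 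Your argument instead establishes the uniform derivative bound $\sup_{[0,1/a]}|g'|<1$; I checked your reductions of $g'(u)<1$ and $g'(u)>-1$ to the two quadratics and their verification on $[0,1/a]$ (the endpoint value for the first one factors as $(2a^2-(n-1))(a^2-3(n-1))/a^2$), and they are sound, as is the appeal to Lemma~\ref{lemma1} to keep the orbit inside $[0,1/a]$ and the mean value theorem step, since $b\in[\tfrac{1}{2a},\tfrac{1}{a}]$ lies in the same interval. Your approach buys two things the paper's does not: an explicit geometric convergence rate, and the weaker hypothesis $a>\sqrt{3(n-1)}$ --- noteworthy because the paper only reports \emph{empirically} (Section~\ref{sec:expts}) that $\sqrt{3(n-1)}\le a$ appears sufficient for convergence of $\{|b_k|\}$, whereas its own proof genuinely uses $a\ge 2\sqrt{n-1}$ to derive \eqref{inc_dec}; so your argument proves a slightly stronger statement (though at $a=\sqrt{3(n-1)}$ exactly, where $b=1/a$ is a double root and $g'(1/a)=1$, your contraction degenerates, so the boundary case probed by the experiments remains open). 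What the paper's route buys in exchange is that it is derivative-free and exhibits the qualitative oscillation structure of $\{|b_k|\}$ along the way, which your observation $g'(b)\in(-1,0)$ recovers only locally at the fixed point.
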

\begin{proof}
	See Appendix \ref{appendB} for the proof.
\end{proof}

Note that the convergence result established in this theorem does not require any assumption of symmetry with respect to variables
$2,3,\ldots,n$ in the initial point $x_0$. 
The only assumption on $x_0$ is that $x_0^{(1)}>0$. We need $x_0^{(1)}\not = 0$ so that $f$ is differentiable at $x_0$; the assumption
on the sign is purely for convenience.
%%%%%%%%%%%%%%%%%%%%%%%%%%%%
%%%%%%%%%%%%%%%%%%%%%%%%%%%%

\textbf{Assumption 1.} For the subsequent theoretical analysis we assume that $$2\sqrt{n-1}  \le a.$$
	With this assumption, as a direct implication of Theorem \ref{conv}, for any given positive $\epsilon$ there exists $K$ such that for $k \ge K$
	we have
	\beq\label{n_1b_eps}
	||b_k| - b| < \dfrac{\epsilon}{n-1}.
	\eeq
As we showed in Lemma \ref{lemma1}, for  $k \ge 0$ we have $|b_k| \le 1/a$ and therefore 
\beq\label{pos}
\dfrac{3(n-1)}{a}\le a-\dfrac{n-1}{a} \le a -(n-1)|b_k|.
\eeq
Thus, $a -(n-1)|b_k|$ is positive and bounded away from zero.

Since $|b_k|$ converges by Theorem~\ref{conv}, we see that in the limit the normalized direction $d_k/\|d_k\|_2$
alternates between two limiting directions.
For an illustration, see Figures \ref{fig:gl1}
and \ref{fig:gl2}. It is this property that allows
us to establish, under some subsequent assumptions, that scaled memoryless BFGS generates iterates $x_k$ for which $f(x_k)$ is bounded below even though $f$ is unbounded below.
%%%%%%%%%%%%%%%%%%%%%%%%%%%%
%%%%%%%%%%%%%%%%%%%%%%%%%%%%

\subsection{Dependence on the Armijo Condition}\label{subsec:armijo}
Combining \eqref{gradf} and \eqref{dksk_1} we get
\beq \label{dir_der}
 \grad f(x_k)^T d_k = -|d^{(1)}_k|
 \left[\begin{array}{c}
 (-1)^k a \\ \mathbb{1} 
\end{array}\right]^T 
 \left[\begin{array}{c}
     (-1)^k \\ |b_k|\mathbb{1}    
 \end{array}\right] = -|d^{(1)}_k|\left(a+\left(n-1\right)|b_k|\right),
\eeq
so the Armijo condition \eqref{armijo_cond} with $t=t_k$ at iteration $k$ is 
\beq \label{real_armijo}
c_1t_k|d^{(1)}_k|\left(a+\left(n-1\right)|b_k|\right)\le f(x_k) - f(x_k+t_kd_k).
\eeq
If $t_k$ satisfies the Wolfe condition, i.e. $t_k$ is large enough that the sign change \eqref{sgn} occurs, then we must have
\beq\label{x1td}
|x^{(1)}_k| < t_k|d^{(1)}_k|.
\eeq
Given this we can derive $f(x_k) - f(x_k+t_kd_k) $  using
the definition of $b_k$ in \eqref{bi} as follows:
\begin{align*} 
&f(x_k) - f(x_k+t_kd_k) 
%&=a|x^{(1)}_k| - a(|t_kd^{(1)}_k| - |x^{(1)}_k|) + (n-1)|b_k|t_k|d^{(1)}_k|\\
=2a|x^{(1)}_k|-\left(a-(n-1)|b_k|\right)t_k|d^{(1)}_k| \numberthis \label{num}.
\end{align*}
%%%%%%%%%%%%%%%%%%%%%%%%%%%%%%%%%%%%%%%%%%%%%%
%%%%%%%%%%%%%%%%%%%%%%%%%%%%%%%%%%%%%%%%%%%%%% 
By defining $\varphi_k$ as follows
\beq\label{C}
\varphi_k = \dfrac{c_1\left(a+(n-1)|b_k|\right) +a-(n-1)|b_k| }{2a},
\eeq
we can restate  the Armijo condition in the following lemma.
\begin{lemma} \label{arm_cond}
	Suppose $t_k$ satisfies the Wolfe condition \eqref{sgn}. Then for $t_k$ to satisfy the Armijo condition \eqref{real_armijo} we must have
	\beq\label{arm_e_s}
	\varphi_kt_k|d^{(1)}_k| \le |x^{(1)}_k|.
	\eeq
\end{lemma}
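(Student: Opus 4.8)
The plan is to observe that this lemma is a direct algebraic consequence of the exact expression \eqref{num} for the function decrease together with the definition \eqref{C} of $\varphi_k$; no new analysis is required. First I would recall that the hypothesis that $t_k$ satisfies the Wolfe condition \eqref{sgn} is precisely what forces the sign change $\sgn(x_k^{(1)}+t_kd_k^{(1)})=-\sgn(x_k^{(1)})$, which is in turn what permits $|x_k^{(1)}+t_kd_k^{(1)}|$ to be rewritten as $t_k|d_k^{(1)}|-|x_k^{(1)}|$ (using \eqref{x1td}); this is exactly the computation already carried out to obtain \eqref{num}. So under the stated hypothesis the right-hand side of the Armijo condition \eqref{real_armijo} equals $2a|x_k^{(1)}|-(a-(n-1)|b_k|)t_k|d_k^{(1)}|$.

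Next I would substitute this expression into \eqref{real_armijo} and rearrange. Moving the term $-(a-(n-1)|b_k|)t_k|d_k^{(1)}|$ to the left-hand side and factoring out the common factor $t_k|d_k^{(1)}|$ gives
\[
\bigl[c_1(a+(n-1)|b_k|)+(a-(n-1)|b_k|)\bigr]\,t_k|d_k^{(1)}|\le 2a|x_k^{(1)}|.
\]
Dividing through by $2a>0$ and recognizing the bracketed coefficient divided by $2a$ as exactly $\varphi_k$ from \eqref{C} yields \eqref{arm_e_s}, which completes the argument.

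The step requiring the most care---though it is hardly an obstacle---is keeping the signs straight: I would emphasize that the leading scalar in the direction formula \eqref{dksk_1} is positive by \eqref{pos}, so that $|d_k^{(1)}|$ and $|b_k|\,|d_k^{(1)}|$ are the correct magnitudes appearing in \eqref{num}, and that the positivity of $2a$ is what lets the final division preserve the inequality. I would also note that every manipulation above is reversible, so the conclusion is in fact an equivalence: given that $t_k$ satisfies the Wolfe condition, the Armijo condition \eqref{real_armijo} holds if and only if \eqref{arm_e_s} holds. The ``must have'' in the statement can therefore be read as ``if and only if'', which is the form actually used in the subsequent analysis of \S\ref{subsec:armijo}.
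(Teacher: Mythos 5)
Your proof is correct and follows essentially the same route as the paper: the paper's proof likewise combines \eqref{num} with \eqref{real_armijo} and then invokes the definition \eqref{C} of $\varphi_k$ to obtain \eqref{arm_e_s}, exactly as you do (the paper simply omits the explicit intermediate rearrangement and division by $2a$). Your added remark that the manipulations are reversible, so the conclusion is in fact an equivalence, is a harmless and accurate refinement but does not change the argument.
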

\begin{proof}
	Combining \eqref{num} and \eqref{real_armijo} we get
	\[
	c_1t_k|d^{(1)}_k|\left(a+\left(n-1\right)|b_k|\right) \le 2a|x^{(1)}_k|-\left(a-\left(n-1\right)|b_k|\right)t_k|d^{(1)}_k|,
	\]
	and using the definition of $\varphi_k$ in \eqref{C}, \eqref{arm_e_s} follows.
\end{proof}
From \eqref{x1td}  and  \eqref{arm_e_s} we see that $\varphi_k$ is the ratio of the lower bound and the upper bound
on the steplength $t_k$ provided by the Wolfe and Armijo conditions respectively.
The next lemma provides bounds 
on $\varphi_k$.

%%%%%%%%%%%%%%%%%%%%%%%%%%%% 
\begin{lemma}
	\beq\label{nice}
	\dfrac{(n-1)|b_k|}{a} < \varphi_k .
	\eeq
\end{lemma}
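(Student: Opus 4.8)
The plan is to prove the inequality by reducing it, through equivalent manipulations, to a transparent linear inequality in $|b_k|$ and then invoking the bound $|b_k|\le 1/a$ furnished by Lemma \ref{lemma1}. Since $a>0$, the claim \eqref{nice} is equivalent to the inequality obtained by multiplying both sides by $2a$, which, using the definition of $\varphi_k$ in \eqref{C}, reads $2(n-1)|b_k| < c_1\bigl(a+(n-1)|b_k|\bigr)+a-(n-1)|b_k|$. Collecting on the left the terms that do not involve $c_1$, this is in turn equivalent to
\[
3(n-1)|b_k|-a < c_1\bigl(a+(n-1)|b_k|\bigr).
\]

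First I would observe that the right-hand side is strictly positive: indeed $c_1>0$ by hypothesis, and $a+(n-1)|b_k|>0$ since $a>0$ and $|b_k|\ge 0$. Hence it suffices to show that the left-hand side is nonpositive, i.e.\ that $3(n-1)|b_k|\le a$, since then the left-hand side is $\le 0$, strictly below the positive right-hand side.

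This remaining bound follows from two facts already in hand. By Lemma \ref{lemma1} we have $|b_k|\le 1/a$, so $3(n-1)|b_k|\le 3(n-1)/a$; and under the standing hypothesis of this section (Assumption 1, though already $\sqrt{3(n-1)}\le a$ suffices) we have $a^2\ge 3(n-1)$, whence $3(n-1)/a\le a$. Chaining these gives $3(n-1)|b_k|\le a$, completing the argument and establishing \eqref{nice}.

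I do not expect any genuine obstacle here: the statement is essentially an algebraic rearrangement combined with the crude bound $|b_k|\le 1/a$. The only points requiring care are the bookkeeping of signs when clearing the positive factor $2a$ and rearranging, and the simultaneous use of $|b_k|\le 1/a$ together with $a^2\ge 3(n-1)$, both of which are valid throughout this section.
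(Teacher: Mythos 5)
Your proof is correct and follows essentially the same route as the paper's: both reduce the claim to the key bound $3(n-1)|b_k|\le a$ (obtained from Lemma \ref{lemma1} together with $a^2\ge 3(n-1)$) and then obtain strictness from $c_1>0$ multiplying the positive quantity $a+(n-1)|b_k|$. The only difference is cosmetic bookkeeping in how the definition of $\varphi_k$ is rearranged.
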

\begin{proof}
	Using Lemma \ref{lemma1} we know $3(n-1)|b_k| \le a$ for all $k$, and so 
	$$2(n-1)|b_k| \le a - (n-1)|b_k|,$$
	and since 
	$$ \dfrac{a - (n-1)|b_k|}{2a} =  \varphi_k -c_1\dfrac{a+(n-1)|b_k|}{2a},$$ 
	and $c_1 > 0$, \eqref{nice} follows.
\end{proof}

%%%%%%%%%%%%%%%%%%%%%%%%%%%%%%%%%%%%%%%%%%%%%%
%%%%%%%%%%%%%%%%%%%%%%%%%%%%%%%%%%%%%%%%%%%%%%
%%%%%%%%%%%%%%%%%%%%%%%%%%%%
%%%%%%%%%%%%%%%%%%%%%%%%%%%% 
\begin{corollary}  \label{s_varphi}
	For $k \ge 1$ we have
	\beq\label{armsk_1}
	|s^{(1)}_k|\le |s^{(1)}_{k-1}|\dfrac{1-\varphi_{k-1}}{\varphi_k}.
	\eeq
\end{corollary}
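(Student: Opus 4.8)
The plan is to combine the Armijo inequality \eqref{arm_e_s} from Lemma~\ref{arm_cond}, applied at \emph{both} iteration $k$ and iteration $k-1$, with the Wolfe sign-change identity \eqref{sgnxk_1}. Recall from \eqref{std} and \eqref{bi} that $|s^{(1)}_j| = t_j|d^{(1)}_j|$ for every $j$, so the conclusion of Lemma~\ref{arm_cond} at iteration $k$ reads $\varphi_k|s^{(1)}_k| \le |x^{(1)}_k|$. Since every $\varphi_j$ is strictly positive (each summand in its defining expression \eqref{C} is positive, as is also implied by \eqref{nice}), this already yields the bound $|s^{(1)}_k| \le |x^{(1)}_k|/\varphi_k$, which is the first ingredient.

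Next I would rewrite the right-hand side $|x^{(1)}_k|$ purely in terms of quantities from iteration $k-1$. The Wolfe condition forces the sign change \eqref{sgn}, which via \eqref{sgnxk_1} gives the exact identity $|x^{(1)}_k| = |s^{(1)}_{k-1}| - |x^{(1)}_{k-1}|$. To control the subtracted term $|x^{(1)}_{k-1}|$ from below, I would apply Lemma~\ref{arm_cond} once more, now at iteration $k-1$; since $t_{k-1}$ also satisfies the Wolfe condition (Armijo–Wolfe steps exist at every iteration by Theorem~\ref{sp_dir}), the same inequality \eqref{arm_e_s} gives $\varphi_{k-1}|s^{(1)}_{k-1}| \le |x^{(1)}_{k-1}|$.

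Putting these together is then a short computation. Substituting the lower bound for $|x^{(1)}_{k-1}|$ into the identity for $|x^{(1)}_k|$ yields $|x^{(1)}_k| \le |s^{(1)}_{k-1}| - \varphi_{k-1}|s^{(1)}_{k-1}| = (1-\varphi_{k-1})|s^{(1)}_{k-1}|$, and dividing by $\varphi_k>0$ gives exactly \eqref{armsk_1}. The hypothesis $k\ge1$ enters precisely because the argument invokes iteration $k-1$, so that both $|x^{(1)}_{k-1}|$ and $|s^{(1)}_{k-1}|$ are available and both line-search conditions hold there.

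Little is genuinely difficult here; the only points demanding care are bookkeeping ones. One must verify that $\varphi_k>0$ before dividing, and one must ensure that Lemma~\ref{arm_cond} is legitimately applicable at both indices, which requires that the Wolfe sign change \eqref{sgn} actually occurred at each of iterations $k-1$ and $k$ — guaranteed by the existence of Armijo–Wolfe steplengths from Theorem~\ref{sp_dir}. I expect no analytic obstacle: the result is a direct telescoping-style consequence of the two-sided control on the steplength $t_k$ encoded by $\varphi_k$, namely that $\varphi_k|s^{(1)}_k|\le|x^{(1)}_k|$ bounds the step from above while the Wolfe identity recycles the leftover $|x^{(1)}_k|$ into the next step's budget.
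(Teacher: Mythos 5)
Your proof is correct and is essentially the paper's own argument: both apply the Armijo bound \eqref{arm_e_s} at iterations $k-1$ and $k$ and combine with the Wolfe identity \eqref{sgnxk_1}, the only (immaterial) difference being that the paper sums the two inequalities before invoking \eqref{sgnxk_1}, while you substitute one into the identity first. Your extra care about $\varphi_k>0$ and the applicability of Lemma~\ref{arm_cond} at both indices is sound and matches what the paper implicitly relies on.
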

\begin{proof}
	Summing the Armijo inequality  \eqref{arm_e_s} for two consecutive iterations we obtain 
	$$
	|s^{(1)}_{k-1}|\varphi_{k-1} + |s^{(1)}_k|\varphi_k \le |x^{(1)}_{k-1}| + |x^{(1)}_k|,
	$$ 
	and noticing that the R.H.S., according to \eqref{sgnxk_1}, is equal to $|s^{(1)}_{k-1}|$ we get \eqref{armsk_1}. 
\end{proof}
%%%%%%%%%%%%%%%%%%%%%%%%%%%%%%%%%%%%%%%%%%%%%%
%%%%%%%%%%%%%%%%%%%%%%%%%%%%%%%%%%%%%%%%%%%%%%
\begin{lemma} \label{Kdef}
	For any given $\epsilon$ $>0$ let $K$ be the smallest integer such that for any $k \ge K$, \eqref{n_1b_eps} holds. Then for all $N > K$ we have  
	\beq\label{me}
	f(x_K) - f(x_N) < a|x^{(1)}_K|+ \left(
	\left(n-1\right)b+\epsilon \right)\sum_{k=K}^{N-1}|s^{(1)}_k|.  
	\eeq
\end{lemma}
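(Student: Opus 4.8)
The plan is to obtain \eqref{me} by telescoping the one-step function decreases from iteration $K$ to iteration $N$. The key observation is that, although \eqref{num} expresses $f(x_k)-f(x_{k+1})$ in terms of the two seemingly unrelated quantities $2a|x^{(1)}_k|$ and $|s^{(1)}_k|$, the term $2a|x^{(1)}_k|$ can be turned into a telescoping difference once the Wolfe identity \eqref{sgnxk_1} is used. This converts the per-step decrease into the sum of a telescoping part controlled by $a|x^{(1)}_K|$ and a part controlled directly by $\sum |s^{(1)}_k|$ via the convergence bound \eqref{n_1b_eps}.

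Concretely, I would first record the one-step decrease. Since $x_{k+1}=x_k+t_kd_k$ and, by \eqref{std}, $|s^{(1)}_k|=t_k|d^{(1)}_k|$, equation \eqref{num} reads $f(x_k)-f(x_{k+1}) = 2a|x^{(1)}_k| - \bigl(a-(n-1)|b_k|\bigr)|s^{(1)}_k|$. I would then substitute the Wolfe identity $|s^{(1)}_k|=|x^{(1)}_k|+|x^{(1)}_{k+1}|$ (which is \eqref{sgnxk_1} with the index shifted by one) into the $-a|s^{(1)}_k|$ piece \emph{only}, leaving the $(n-1)|b_k|\,|s^{(1)}_k|$ piece untouched. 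The surviving first-order terms collapse to $a\bigl(|x^{(1)}_k|-|x^{(1)}_{k+1}|\bigr)$, giving the clean additive form $f(x_k)-f(x_{k+1}) = a\bigl(|x^{(1)}_k|-|x^{(1)}_{k+1}|\bigr) + (n-1)|b_k|\,|s^{(1)}_k|$. Summing this identity over $k=K,\dots,N-1$, the first group telescopes to $a\bigl(|x^{(1)}_K|-|x^{(1)}_N|\bigr)\le a|x^{(1)}_K|$, which accounts for the $a|x^{(1)}_K|$ term of \eqref{me}. For the second group, because $N>K$ and \eqref{n_1b_eps} holds for every $k\ge K$, I have $(n-1)|b_k|<(n-1)b+\epsilon$ for each index in range, whence $\sum_{k=K}^{N-1}(n-1)|b_k|\,|s^{(1)}_k| < \bigl((n-1)b+\epsilon\bigr)\sum_{k=K}^{N-1}|s^{(1)}_k|$. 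Adding the two bounds yields \eqref{me}.

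There is no serious analytical obstacle here; the argument is essentially bookkeeping. The one nonobvious step, and the place where care is needed, is the \emph{partial} use of \eqref{sgnxk_1}: one must split $\bigl(a-(n-1)|b_k|\bigr)|s^{(1)}_k|$ and apply the identity only to the $a|s^{(1)}_k|$ term, so that the telescoping difference emerges, rather than naively expanding every factor. I would also verify the strictness of the final inequality, which follows already from the strict bound $(n-1)|b_k|<(n-1)b+\epsilon$ over the nonempty index range (and is reinforced by $|x^{(1)}_N|>0$, since $f$ is differentiable at $x_N$). Note that the estimate does not require $\{|s^{(1)}_k|\}$ to be summable; that finiteness question is handled separately, presumably through Corollary \ref{s_varphi}.
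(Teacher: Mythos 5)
Your proof is correct and follows essentially the same route as the paper's: both start from the exact decrease formula \eqref{num} with \eqref{std}, use the Wolfe identity \eqref{sgnxk_1} to telescope the $2a|x^{(1)}_k|$ terms, bound $(n-1)|b_k|$ by $(n-1)b+\epsilon$ via \eqref{n_1b_eps}, and drop $-a|x^{(1)}_N|$. The only difference is cosmetic — you telescope the exact identity first and then apply \eqref{n_1b_eps}, whereas the paper applies \eqref{n_1b_eps} term by term before summing.
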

\begin{proof}
	Using $t_kd_k= s_k$ and $x_{k+1} = x_k+s_k$ in \eqref{num} and then applying \eqref{n_1b_eps} we obtain
	\beq \label{approx}
	f(x_k) - f(x_{k+1}) < 2a|x^{(1)}_k|-a|s^{(1)}_k| +\left(\left(n-1\right)b+\epsilon\right)|s^{(1)}_k| . 
	\eeq
	Summing up \eqref{approx} from $k=K$ to $k=N-1$ and recalling \eqref{sgnxk_1},
	we get
	\begin{align*}
	&f(x_K) - f(x_N) <\\
	&a\sum_{k=K}^{N-1}|s^{(1)}_k| + a|x^{(1)}_K|- a|x^{(1)}_N|  -a\sum_{k=K}^{N-1}|s^{(1)}_k| +\left(
	\left(n-1\right)b+\epsilon \right)\sum_{k=K}^{N-1}|s^{(1)}_k|.
	\end{align*}
	Canceling the first and fourth terms and dropping $-a|x_N|$, we arrive at \eqref{me}.
\end{proof}

From applying Theorem \ref{conv} to the definition of $\varphi_k$ in \eqref{C} it is immediate that $\{\varphi_k\}$ converges. Let
\beq
\label{phi-def}
\varphi= \dfrac{c_1\left( a+(n-1)b\right) + a-(n-1)b}{2a},
\eeq
so
\beq\label{limlim}
\lim_{k \to \infty} \varphi_k = \varphi.
\eeq

%%%%%%%%%%%%%%%%%%%%%%%%%%%%
%%%%%%%%%%%%%%%%%%%%%%%%%%%%
\begin{lemma} \label{eps_lem}
	Assume
	\beq\label{epsilon} 
	0 < \epsilon \le \dfrac{\sqrt{a^2-3(n-1)}}{3},
	\eeq 
	and let $K$ be defined as in Lemma \ref{Kdef}. Then for any $k\ge K$ we have
	\beq\label{varfi_err}
	\left|\dfrac{1-\varphi_{k-1}}{\varphi_k} - \dfrac{1-\varphi}{\varphi}\right| < \dfrac{15}{a}\epsilon.
	\eeq
\end{lemma}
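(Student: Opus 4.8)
The plan is to reduce the whole estimate to control of the single scalar $\varphi_k-\varphi$ together with a uniform lower bound on the $\varphi_k$. First I would subtract the definitions \eqref{C} and \eqref{phi-def} directly: the $a$ and $c_1 a$ contributions cancel, leaving
\[ \varphi_k-\varphi=\frac{(c_1-1)(n-1)(|b_k|-b)}{2a}. \]
Since $0<c_1<1$ and \eqref{n_1b_eps} gives $(n-1)\,\big||b_k|-b\big|<\epsilon$ for all $k\ge K$, this yields the clean bound $|\varphi_k-\varphi|<\tfrac{(1-c_1)\epsilon}{2a}<\tfrac{\epsilon}{2a}$ (and the same for $\varphi_{k-1}$, once $k-1\ge K$). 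This is the only place Theorem \ref{conv}, via \eqref{n_1b_eps}, enters the argument.

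Next I would show that the denominators are bounded away from zero uniformly. Rewriting
\[ \varphi_k=\frac{a(1+c_1)-(1-c_1)(n-1)|b_k|}{2a} \]
and using $(n-1)|b_k|\le a/3$ — which follows from $|b_k|\le 1/a$ in Lemma \ref{lemma1} together with $3(n-1)\le a^2$ under Assumption 1, and equivalently from combining \eqref{n_1b_eps} with the hypothesis \eqref{epsilon} on $\epsilon$ — I obtain $\varphi_k\ge\tfrac{1+2c_1}{3}>\tfrac13$, and likewise $\varphi\ge\tfrac13$; moreover $\varphi_k,\varphi<1$ because $(n-1)|b_k|\ge0$. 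Hence $\varphi_k\varphi>1/9$.

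Finally I would put the target difference over the common denominator $\varphi_k\varphi$ and expand the numerator as
\[ (1-\varphi_{k-1})\varphi-(1-\varphi)\varphi_k=(\varphi-\varphi_k)+\varphi(\varphi_k-\varphi_{k-1}). \]
Bounding $|\varphi-\varphi_k|<\tfrac{\epsilon}{2a}$, then $|\varphi_k-\varphi_{k-1}|\le|\varphi_k-\varphi|+|\varphi-\varphi_{k-1}|<\tfrac{\epsilon}{a}$, together with $\varphi<1$ and $\varphi_k\varphi>1/9$, gives
\[ \left|\frac{1-\varphi_{k-1}}{\varphi_k}-\frac{1-\varphi}{\varphi}\right|<9\left(\frac{\epsilon}{2a}+\frac{\epsilon}{a}\right)=\frac{27\epsilon}{2a}<\frac{15\epsilon}{a}, \]
which is \eqref{varfi_err}. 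The main obstacle is the uniform lower bound on the $\varphi_k$: without it the ratio could be ill-behaved, and this is exactly where Lemma \ref{lemma1} (or, equivalently, the hypothesis \eqref{epsilon}) is indispensable. A secondary technical point is index bookkeeping — controlling $\varphi_{k-1}$ through \eqref{n_1b_eps} requires $k-1\ge K$, so the estimate is really established for $k\ge K+1$, the case $k=K$ being absorbed into the choice of $K$ — and the fact that the final constant comes out as $27/2<15$, leaving comfortable slack in the stated bound.
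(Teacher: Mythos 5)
Your proof is correct and is essentially the paper's own argument: both hinge on the uniform bounds $\tfrac13 < \varphi_k,\varphi < 1$ (equivalently $1 < 1/\varphi_k < 3$), the estimate $|\varphi_k-\varphi| = O(\epsilon/a)$ obtained from \eqref{n_1b_eps} applied to the definitions \eqref{C} and \eqref{phi-def}, and the same algebraic decomposition of the difference of ratios --- your common-denominator numerator $(\varphi-\varphi_k)+\varphi(\varphi_k-\varphi_{k-1})$ over $\varphi_k\varphi$ is exactly the paper's split $\frac{\varphi-\varphi_k}{\varphi_k\varphi}+\frac{\varphi_k-\varphi_{k-1}}{\varphi_k}$. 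The only (harmless) deviations are refinements: you obtain $\varphi_k>\tfrac13$ from Lemma \ref{lemma1} together with Assumption 1 rather than from \eqref{n_1b_eps} and \eqref{epsilon} as the paper does (so your argument never actually invokes hypothesis \eqref{epsilon}), and the exact cancellation producing the factor $1-c_1$ gives you the sharper constant $27/2$ where the paper's looser $(1+c_1)$ bound produces exactly $15$; the index caveat you flag at $k=K$ (controlling $|\varphi_{K-1}-\varphi|$) is equally present, and equally glossed over, in the paper's proof.
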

\begin{proof}
	By rearranging terms in \eqref{b} and using \eqref{epsilon} we get 
	\beq \label{newineq}
	(n-1)b+\epsilon \leq  (n-1)b + \dfrac{\sqrt{a^2-3(n-1)}}{3} = \dfrac{a}{3}. 
	\eeq
	Using \eqref{n_1b_eps} and \eqref{newineq},
	for $k \ge K$ we have
	\[
	0 < a-(n-1)b-\epsilon < a-(n-1)|b_k|.
	\]
	Combining this with  \eqref{nice}  
	we get
	$$0 < \dfrac{a-(n-1)b-\epsilon}{2a} <  \varphi_k < 1.$$	
	Hence, 
	$$1 < \dfrac{1}{\varphi_k } < \dfrac{2a}{a-(n-1)b-\epsilon} \leq \dfrac{2a}{a-\frac{a}{3}} = 3.$$
	Since $0<c_1<1$, from \eqref{n_1b_eps}, \eqref{C}, \eqref{phi-def} and \eqref{limlim} we get
	\[|\varphi_k - \varphi| < \dfrac{(1+c_1)\epsilon}{2a} < \dfrac{\epsilon}{a}. \]
	So,
	\begin{align*}
	&\left|\dfrac{1-\varphi_{k-1}}{\varphi_k} - \dfrac{1-\varphi}{\varphi} \right| 
	= \left|\dfrac{1}{\varphi_k} -1 + \dfrac{\varphi_k -\varphi_{k-1}}{\varphi_k} - \dfrac{1}{\varphi} +1 \right|  \\
	&< \left|\dfrac{\varphi -\varphi_{k}}{\varphi_k\varphi} \right| + \left|\dfrac{\varphi_k -\varphi_{k-1}}{\varphi_k} \right| 
	< \dfrac{\epsilon}{a\varphi_k} \left (\dfrac{1}{\varphi}+2 \right).
	\end{align*}%$1/\varphi_k < 3$, i.e $\varphi< 3$
	Note that $ 1 < 1/\varphi_k  <  3$ applies to all $\varphi_k$ (as well as the limit $\varphi$) with $k \ge K$, and therefore we conclude \eqref{varfi_err}.
\end{proof}

%%%%%%%%%%%%%%%%%%%%%%%%%%%%
%%%%%%%%%%%%%%%%%%%%%%%%%%%%
Let 
\beq\label{var_eps}
\psi_{\epsilon}=\dfrac{1-\varphi}{\varphi}+\dfrac{15}{a}\epsilon.
\eeq
If Lemma \ref{eps_lem} applies then
from \eqref{armsk_1} and \eqref{varfi_err} we conclude
\beq\label{armsk_phi}
|s^{(1)}_k|< \psi_{\epsilon} |s^{(1)}_{k-1}|.
\eeq
That is to say, with $\epsilon$ satisfying \eqref{epsilon}, after at most $K$ iterations, \eqref{armsk_phi} holds. Consequently, with the additional assumption $\psi_{\epsilon} < 1$, we obtain  
\beq\label{sk_psi}
\sum_{k=K}^{N-1}|s^{(1)}_k| < |s^{(1)}_K|\dfrac{1}{1-\psi_{\epsilon}}.
\eeq
Now we can prove the main result of this subsection. Recall that $c_1 < 1$.
%%%%%%%%%%%%%%%%%%%%%%%%%%%%%%%%%%%%%%%%%%%%%%
%%%%%%%%%%%%%%%%%%%%%%%%%%%%%%%%%%%%%%%%%%%%%%
\begin{theorem}\label{c_1thm}
	Suppose $c_1$ is chosen large enough that
	\beq
	\dfrac{1}{c_1} -1< \dfrac{a}{(n-1)b} \label{c1failcond}
	\eeq
	holds. Then, using any Armijo-Wolfe line search with any starting point $x_0$ with $x_0^{(1)} \neq 0$,
	scaled memoryless BFGS applied to \eqref{fdef} fails in the sense that $f(x_N)$ is bounded below as $N \to \infty$.
\end{theorem}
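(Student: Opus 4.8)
The plan is to recognize that the hypothesis \eqref{c1failcond} is exactly the condition guaranteeing $\psi_\epsilon < 1$ for sufficiently small $\epsilon$, and then to feed the resulting geometric decay of $|s^{(1)}_k|$ into the telescoped estimate of Lemma \ref{Kdef}. First I would unwind the definition \eqref{phi-def} of $\varphi$ to see what \eqref{c1failcond} says about $\varphi$. Writing $2a\varphi = c_1(a+(n-1)b) + a - (n-1)b$, one finds $a(2\varphi-1) = c_1 a - (1-c_1)(n-1)b$, so that, since $\varphi \in (0,1)$,
\[
\varphi > \tfrac12 \iff c_1 a > (1-c_1)(n-1)b \iff \tfrac{1}{c_1}-1 < \tfrac{a}{(n-1)b}.
\]
Because $\varphi>0$, the inequality $\varphi>\tfrac12$ is in turn equivalent to $\tfrac{1-\varphi}{\varphi}<1$. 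Thus \eqref{c1failcond} is precisely the statement that the limiting ratio $\tfrac{1-\varphi}{\varphi}$ appearing in \eqref{var_eps} is strictly less than one.

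Given this, I would choose $\epsilon>0$ small enough to satisfy both requirements at once: the constraint \eqref{epsilon} demanded by Lemma \ref{eps_lem}, and the bound $\tfrac{15}{a}\epsilon < 1 - \tfrac{1-\varphi}{\varphi}$, which is available precisely because the right-hand side is strictly positive by the previous paragraph. Concretely one may take $\epsilon = \min\!\big(\tfrac{\sqrt{a^2-3(n-1)}}{3},\, \tfrac{a}{16}(1-\tfrac{1-\varphi}{\varphi})\big)$, or any positive value below both thresholds. For this $\epsilon$, Lemma \ref{eps_lem} applies, so $\psi_\epsilon = \tfrac{1-\varphi}{\varphi} + \tfrac{15}{a}\epsilon < 1$, and \eqref{armsk_phi} gives the contraction $|s^{(1)}_k| < \psi_\epsilon |s^{(1)}_{k-1}|$ for all $k \ge K$, where $K$ is as in Lemma \ref{Kdef}. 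Summing the resulting geometric series yields the uniform bound \eqref{sk_psi}, namely $\sum_{k=K}^{N-1}|s^{(1)}_k| < |s^{(1)}_K|/(1-\psi_\epsilon)$, independent of $N$.

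Finally I would combine this with Lemma \ref{Kdef}. Substituting the summability bound into \eqref{me} gives
\[
f(x_K) - f(x_N) < a|x^{(1)}_K| + \big((n-1)b+\epsilon\big)\frac{|s^{(1)}_K|}{1-\psi_\epsilon}
\]
for every $N > K$, and the right-hand side is a finite constant not depending on $N$. Rearranging shows $f(x_N)$ is bounded below by $f(x_K)$ minus that constant for all $N$, which is exactly the asserted failure. The only real content beyond bookkeeping is the equivalence established in the first paragraph; I expect the main (though modest) obstacle to be verifying that $\epsilon$ can be chosen to meet the two upper bounds simultaneously — this is immediate once $1-\tfrac{1-\varphi}{\varphi}>0$ is known, but it is the step where the hypothesis \eqref{c1failcond} is actually used.
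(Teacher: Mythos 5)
Your proposal is correct and follows essentially the same route as the paper's proof: recognizing that \eqref{c1failcond} is equivalent to $\varphi > \tfrac12$, choosing $\epsilon$ small enough that both \eqref{epsilon} and $\psi_\epsilon < 1$ hold, and then combining Lemmas \ref{Kdef} and \ref{eps_lem} with the geometric-series bound \eqref{sk_psi} substituted into \eqref{me}. The only difference is that you spell out the algebra behind the equivalence and give an explicit admissible $\epsilon$, details the paper leaves implicit.
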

\begin{proof}
	It follows from \eqref{c1failcond} and \eqref{phi-def} that $\varphi > 1/2$.
	Therefore, using \eqref{var_eps}, we can  choose $\epsilon$ small enough such that $\psi_{\epsilon} < 1$ 
	holds in addition to \eqref{epsilon}. Applying Lemmas \ref{Kdef} and \ref{eps_lem}, we conclude
	that there exists $K$ such that for for any $N > K$,  \eqref{sk_psi} holds, and,
	substituting this into \eqref{me} we get
	\beq\label{tot_dec_1}
	f(x_K) - f(x_N) <  a|x^{(1)}_K| + |s^{(1)}_K|\dfrac{
		\left(n-1\right)b+\epsilon}{1-\psi_{\epsilon}}.
	\eeq
	This establishes that $f(x_N)$ is bounded below  for all $N > K$.
\end{proof}
%%%%%%%%%%%%%%%%%%%%%%%%%%%%%%%%%%%%%%%%%%%%%%
%%%%%%%%%%%%%%%%%%%%%%%%%%%%%%%%%%%%%%%%%%%%%%
Using \eqref{b} we see that the failure condition \eqref{c1failcond} for scaled memoryless BFGS with any Armijo-Wolfe line search
applied to \eqref{fdef} is equivalent to
\beq\label{lbf_fail}
\dfrac{1-c_1}{c_1}(n-1) ~<~ a^2+a\sqrt{a^2 - 3(n-1)}.
\eeq
The corresponding failure condition for the gradient method on the same function, again using any
Armijo-Wolfe line search, is, as we showed in \cite{AO18},
\beq\label{gm_fail}
\dfrac{1-c_1}{c_1}(n-1) ~ < ~a^2.
\eeq
Hence, scaled memoryless BFGS fails under a  \emph{weaker} condition relating $a$ to the Armijo parameter than the condition for failure of the gradient
method on the same function with the same line search conditions.
Indeed, Assumption 1 implies 
\[
a^2+a\sqrt{a^2 - 3(n-1)} \geq 4(n-1) + 2\sqrt{n-1}\sqrt{n-1} = 6 (n-1).
\]
So, if the Armijo parameter $c_1 \geq 1/7$, then \eqref{lbf_fail} holds.  
In contrast, the same assumption implies that if $c_1\geq 1/5$, then
\eqref{gm_fail} holds. So, scaled memoryless BFGS with any Armijo-Wolfe line search applied to \eqref{fdef} fails 
under a weaker condition on the Armijo parameter than the gradient method does.

\subsection{Results for a specific Armijo-Wolfe line search, independent of the Armijo parameter}\label{subsec:specificLS}
Considering only the first component of the direction $d_k$ in \eqref{dksk_1} we have
\beq\label{sksk_1}
\dfrac{2a}{a-(n-1)|b_{k-1}|} |d^{(1)}_k| =|s^{(1)}_{k-1}|.
\eeq
Using \eqref{std}, it follows that if
\beq\label{tkbd}
t_k < \dfrac{2a}{a-(n-1)|b_{k-1}|},  
\eeq
we have $|s^{(1)}_k| <|s^{(1)}_{k-1}|$.
Note that the R.H.S. of \eqref{tkbd} is greater than two. However,
as shown in the next lemma,
except at the initial iteration ($k=0$), $t = 2$ 
is always large enough to satisfy the Wolfe condition, implying that there exists $t\leq 2$ satisfying
both the Armijo and Wolfe conditions. 

%%%%%%%%%%%%%%%%%%%%%%%%%%%%
%%%%%%%%%%%%%%%%%%%%%%%%%%%% 
\begin{lemma}\label{thm_aw} 
For $k\geq 1$, the steplength
$t_k = 2$ always satisfies the Wolfe condition \eqref{sgn}, i.e., we have
\beq\label{sgnx1}
|x^{(1)}_k| < 2|d^{(1)}_k|.
\eeq
\end{lemma}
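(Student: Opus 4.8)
The plan is to prove the displayed inequality \eqref{sgnx1} directly, since, as noted in the discussion around \eqref{x1td}, taking $t_k = 2$ makes the Wolfe condition \eqref{sgn} equivalent to $|x_k^{(1)}| < 2|d_k^{(1)}|$. The guiding idea is to express both sides in terms of the first component $|s_{k-1}^{(1)}|$ of the previous step, and then to reach back one iteration to obtain the lower bound on $|x_{k-1}^{(1)}|$ supplied by the Armijo condition that $t_{k-1}$ already satisfied.

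First I would rewrite the right-hand side: from \eqref{sksk_1} we have $2|d_k^{(1)}| = \frac{a-(n-1)|b_{k-1}|}{a}|s_{k-1}^{(1)}|$. For the left-hand side, since $k \geq 1$ the steplength $t_{k-1}$ satisfied the Wolfe condition, so \eqref{sgnxk_1} gives $|x_k^{(1)}| = |s_{k-1}^{(1)}| - |x_{k-1}^{(1)}|$. Substituting both expressions into \eqref{sgnx1} and cancelling the common factor $|s_{k-1}^{(1)}|$, the target inequality reduces to the equivalent statement $\frac{(n-1)|b_{k-1}|}{a}|s_{k-1}^{(1)}| < |x_{k-1}^{(1)}|$.

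Next I would supply the missing lower bound on $|x_{k-1}^{(1)}|$. Because the algorithm's steplength $t_{k-1}$ satisfied both the Armijo and Wolfe conditions, Lemma \ref{arm_cond} applies at iteration $k-1$ and yields $\varphi_{k-1}\,t_{k-1}|d_{k-1}^{(1)}| \leq |x_{k-1}^{(1)}|$; using $t_{k-1}|d_{k-1}^{(1)}| = |s_{k-1}^{(1)}|$ from \eqref{std}, this becomes $\varphi_{k-1}|s_{k-1}^{(1)}| \leq |x_{k-1}^{(1)}|$. Combining it with the strict bound \eqref{nice}, namely $\frac{(n-1)|b_{k-1}|}{a} < \varphi_{k-1}$, gives $\frac{(n-1)|b_{k-1}|}{a}|s_{k-1}^{(1)}| < \varphi_{k-1}|s_{k-1}^{(1)}| \leq |x_{k-1}^{(1)}|$, which is exactly the reduced inequality, completing the argument.

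Since the proof is a short chain of substitutions and known estimates, there is no serious analytical obstacle; the one step requiring care, and the key idea, is recognizing that the needed lower bound on $|x_{k-1}^{(1)}|$ comes not from the current iteration but from invoking the Armijo condition one step earlier, where it is already known to hold. I would also verify the boundary case $k = 1$: here $k-1 = 0$, and I should confirm that Lemma \ref{arm_cond} and \eqref{nice} are valid at index $0$ (they are, since $b_0 = 1/a$ and $t_0$ satisfies both line-search conditions), so the argument indeed covers all $k \geq 1$.
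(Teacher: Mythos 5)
Your proposal is correct and follows essentially the same route as the paper's proof: both rest on the Armijo bound $\varphi_{k-1}|s^{(1)}_{k-1}| \le |x^{(1)}_{k-1}|$ from Lemma \ref{arm_cond} at iteration $k-1$, combined with \eqref{nice}, \eqref{sgnxk_1}, and \eqref{sksk_1}. The only difference is direction of presentation --- you reduce the target \eqref{sgnx1} backwards to that known bound, while the paper derives \eqref{sgnx1} forwards from it --- which is a purely cosmetic distinction.
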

\begin{proof}
Since $k\geq 1$, we know that the Armijo and Wolfe conditions hold at iteration $k-1$ by definition
of Algorithm 1. So, using  \eqref{arm_e_s} and \eqref{std} we have
\beq\label{arm_k_1}
\varphi_{k-1}|s^{(1)}_{k-1}| \le |x^{(1)}_{k-1}|.
\eeq
Using the inequality \eqref{nice} in the L.H.S.\   and the equality \eqref{sgnxk_1} in the R.H.S.\  we get
$$ \dfrac{(n-1)|b_{k-1}|}{a} |s^{(1)}_{k-1}| <  |s^{(1)}_{k-1}| - |x^{(1)}_k|,$$
i.e. 
\[ |x^{(1)}_k| < |s^{(1)}_{k-1}|\dfrac{a - (n-1)|b_{k-1}|}{a}.\]
Substituting \eqref{sksk_1} into the R.H.S., we obtain \eqref{sgnx1}.
\end{proof} 

Now let us focus on the Armijo-Wolfe bracketing line search given in \cite{LO13,AO18}, which we state
here for convenience.

\begin{algorithm} [H]
 \textbf{Algorithm 2 (Armijo-Wolfe Bracketing Line Search)}
 %\label{alg:alg2}
\begin{algorithmic}
\State $\alpha \leftarrow 0$
\State $\beta \leftarrow +\infty $
\State $t\leftarrow 1$
\While  {true} 
      \If {the Armijo condition \eqref{armijo_cond}) fails}
            \State $\beta \leftarrow t $
      \ElsIf  {the Wolfe condition \eqref{wolfe_cond} fails}
           \State $\alpha \leftarrow t $
      \Else
           \State  stop and return $t$
     \EndIf
     
     \If {$ \beta < +\infty $}
            \State $t \leftarrow (\alpha+\beta)/2$
      \Else
           \State $t \leftarrow 2\alpha$
     \EndIf
\EndWhile
\end{algorithmic}
\end{algorithm}
It is known from the results in \cite{LO13} that provided $f$ is bounded below along $d_{k-1}$ (as we already
established must hold for directions generated by Algorithm 1), 
the Armijo-Wolfe bracketing line search will terminate with a steplength $t$ satisfying both conditions.
In the following lemma we show that if we use this line search, it always generates  $t_k\leq 2$ for $k\geq 1$.

\begin{lemma}\label{brakLS}
When scaled memoryless BFGS is applied to \eqref{fdef}, using Algorithm 2 %\ref{alg:alg2}
it always returns steplength $t_k \le 2$ for $k \ge 1$.
\begin{proof}
	The line search begins with the unit step. If this step, $t=1$, does not satisfy the Armijo condition \eqref{armijo_cond},
	then the step is contracted, so the final step is less than one. On the other hand, if $t=1$ satisfies \eqref{armijo_cond},
	then the line search checks whether the Wolfe condition \eqref{wolfe_cond} is satisfied too. 
	If it is, then the line search quits; if not, the step is doubled
	and hence the line search next checks whether $t=2$ satisfies \eqref{wolfe_cond}. At the initial iteration $(k=0)$,
	several doublings might be needed before \eqref{wolfe_cond} is eventually satisfied. But for subsequent steps
	($k\geq 1$), we know that $t=2$ must satisfy the Wolfe condition, so the final step must satisfy $t_k=2$ (if
	$t=2$ satisfies \eqref{armijo_cond}) or $t_k < 2$ (otherwise). Thus, for $k\geq 1$ we always have $t_k\leq 2$.
\end{proof}
\end{lemma}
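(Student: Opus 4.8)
The plan is to trace the control flow of Algorithm 2 directly, using Lemma \ref{thm_aw} as the single nontrivial ingredient. The variable $\beta$ is the running upper bracket: it is initialized to $+\infty$ and is set to a finite value precisely when the Armijo condition \eqref{armijo_cond} fails at some trial $t$. The structural observation I would record first is that, once $\beta$ becomes finite, every subsequent trial is computed as $t = (\alpha+\beta)/2$ and therefore satisfies $\alpha < t < \beta$; since $\alpha$ can only increase (it is reset to the current $t$ when Wolfe fails) and $\beta$ can only decrease (it is reset to the current $t < \beta$ when Armijo fails), all later trials remain strictly below the value $\beta$ had at the moment it first became finite. Hence the returned step is bounded above by that first finite value of $\beta$.

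First I would dispose of the case in which the initial trial $t=1$ fails the Armijo condition. Then $\beta$ is set to $1$ immediately, and by the observation above every later trial, and in particular the returned step, lies in $(0,1)$, so $t_k < 1 \le 2$. In the complementary case $t=1$ satisfies Armijo, the algorithm tests the Wolfe condition \eqref{wolfe_cond}; if it also holds, the search returns $t_k = 1 \le 2$. Otherwise Wolfe fails, so $\alpha$ is set to $1$ while $\beta$ remains $+\infty$, and the next trial is the doubled step $t = 2\alpha = 2$.

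The crux of the argument is what happens at $t = 2$. Here I invoke Lemma \ref{thm_aw}: for $k \ge 1$, the step $t = 2$ always satisfies the Wolfe condition \eqref{sgn}, equivalently \eqref{sgnx1}. Consequently the Wolfe branch cannot be taken at $t = 2$, so $\alpha$ is never updated to $2$; but updating $\alpha$ to $2$ is exactly what the doubling rule $t \leftarrow 2\alpha$ would need in order to produce a trial of $4$. Thus the doubling phase can never push the trial beyond $2$. At $t = 2$ there remain only two possibilities: either the Armijo condition also holds, and the search returns $t_k = 2$, or the Armijo condition fails, in which case $\beta$ is set to $2$ and, by the structural observation of the first paragraph, every subsequent trial and the returned step lie strictly below $2$. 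In every branch we obtain $t_k \le 2$, completing the proof.

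The only genuine obstacle is establishing that the doubling phase is confined to the single step $1 \to 2$; everything else is bookkeeping on the bracket. That confinement rests entirely on Lemma \ref{thm_aw}, which guarantees the Wolfe condition at $t = 2$ and thereby blocks the one transition ($\alpha \leftarrow 2$) that could trigger a further doubling. I would therefore present Lemma \ref{thm_aw} as the pivotal input and keep the remainder as a short case analysis on the two tests performed at $t = 1$ and $t = 2$.
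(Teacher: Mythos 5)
Your proposal is correct and follows essentially the same route as the paper's proof: a case analysis on the trial steps $t=1$ and $t=2$ in Algorithm~2, with Lemma~\ref{thm_aw} (Wolfe holds at $t=2$ for $k\ge 1$) as the sole nontrivial ingredient blocking any doubling beyond $2$. The only difference is presentational --- you make explicit the bracket invariant (once $\beta$ is finite, all later trials stay strictly below its first finite value) that the paper leaves implicit in phrases like ``the step is contracted.''
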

Now we can present the main result of this subsection: using a line search with the property just described, the optimization method fails.
%%%%%%%%%%%%%%%%%%%%%%%%%%%%%%%%%%%%%%%%%%%%%%
%%%%%%%%%%%%%%%%%%%%%%%%%%%%%%%%%%%%%%%%%%%%%%
\begin{theorem} \label{del_thm}
If  scaled memoryless BFGS is applied to \eqref{fdef}, using
 	an Armijo-Wolfe line search that satisfies $t_k\leq 2$ for $k\geq 1$, such as
	Algorithm 2 then the method fails in the sense that
$f(x_N)$ is bounded below as $N \to \infty$.
\end{theorem}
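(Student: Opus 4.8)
The plan is to show that the hypothesis $t_k\le 2$ forces the scalar sequence $\{|s^{(1)}_k|\}$ to contract geometrically, so that $\sum_k|s^{(1)}_k|<\infty$; the telescoped estimate \eqref{me} of Lemma~\ref{Kdef} then immediately bounds $f(x_N)$ below, which is exactly the assertion of failure.

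First I would record the one-step contraction. Combining \eqref{sksk_1} with \eqref{std} gives
\[
|s^{(1)}_k| = t_k|d^{(1)}_k| = t_k\,\dfrac{a-(n-1)|b_{k-1}|}{2a}\,|s^{(1)}_{k-1}|,
\]
so the bound $t_k\le 2$ yields $|s^{(1)}_k|\le\left(1-\dfrac{(n-1)|b_{k-1}|}{a}\right)|s^{(1)}_{k-1}|$. The essential point—and the reason the value $2$ is the right threshold—is that, by \eqref{sksk_1}, the steplength $\frac{2a}{a-(n-1)|b_{k-1}|}$ is exactly the value at which $|s^{(1)}_k|$ would equal $|s^{(1)}_{k-1}|$; since this value exceeds $2$ (as noted after \eqref{tkbd}), the bound $t_k\le 2$ places $t_k$ strictly below it and hence makes the contraction factor strictly less than $1$.

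Next I would show this factor is bounded away from $1$ for large $k$. By Theorem~\ref{conv} we have $|b_{k-1}|\to b$, and since $b\ge 1/(2a)>0$ (the lower bound recorded just after the definition \eqref{b}) the limit $(n-1)b/a$ is strictly positive. Concretely, fix $\epsilon\in(0,(n-1)b)$ and let $K$ be as in Lemma~\ref{Kdef}; then \eqref{n_1b_eps} gives $(n-1)|b_{k-1}|>(n-1)b-\epsilon$ for $k>K$, so $|s^{(1)}_k|\le\rho\,|s^{(1)}_{k-1}|$ with $\rho:=1-\dfrac{(n-1)b-\epsilon}{a}<1$. Iterating, $|s^{(1)}_k|\le\rho^{\,k-K}|s^{(1)}_K|$ for $k\ge K$, whence $\sum_{k=K}^{N-1}|s^{(1)}_k|\le|s^{(1)}_K|/(1-\rho)$ uniformly in $N$.

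Finally I would substitute this uniform bound into \eqref{me}, obtaining
\[
f(x_K)-f(x_N) < a|x^{(1)}_K| + ((n-1)b+\epsilon)\dfrac{|s^{(1)}_K|}{1-\rho},
\]
a constant independent of $N$, so $f(x_N)$ is bounded below as $N\to\infty$. For Algorithm~2 the hypothesis $t_k\le 2$ is exactly the conclusion of Lemma~\ref{brakLS}, so nothing extra is needed in that case. I do not anticipate a serious obstacle here: the convergence $|b_k|\to b$, the step bound $t_k\le 2$, and the summed Armijo estimate \eqref{me} are all already in hand, and the remaining work is simply to read off the geometric decay. The only point demanding care is verifying \emph{strict} contraction with a factor bounded away from $1$, and this rests entirely on $b>0$, which Assumption~1 guarantees through $b\ge 1/(2a)$.
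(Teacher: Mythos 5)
Your proposal is correct and follows essentially the same route as the paper's proof: the one-step bound $|s^{(1)}_k|\le\frac{a-(n-1)|b_{k-1}|}{a}|s^{(1)}_{k-1}|$ is the paper's \eqref{t22}, your $\rho=1-\frac{(n-1)b-\epsilon}{a}$ is exactly the paper's $\delta_\epsilon$ (with the same requirement $\epsilon<(n-1)b$, i.e.\ $\delta_\epsilon<1$), and the geometric summation fed into \eqref{me} is the paper's \eqref{sum_conv} and \eqref{tot_dec_1}. The only cosmetic differences are your explicit remark that $\frac{2a}{a-(n-1)|b_{k-1}|}$ is the break-even steplength and your invocation of $b\ge\frac{1}{2a}$ to guarantee $b>0$, both of which the paper leaves implicit.
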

\begin{proof}	
Recalling $t_{k+1}d^{(1)}_{k+1}=s^{(1)}_{k+1}$ again, using \eqref{sksk_1} and $t_{k+1} \le 2$ we find that
\beq\label{t22}
 |s^{(1)}_{k+1}| \le  \dfrac{a-(n-1)|b_k|}{a}|s^{(1)}_k|.
\eeq
Let $\epsilon>0$ satisfy
\[
     \delta_{\epsilon} \equiv \dfrac{a-(n-1)b}{a} + \dfrac{\epsilon}{a} < 1.
\] 
Define $K$ as in Lemma \ref{Kdef}, so that \eqref{n_1b_eps} holds, and hence
$$\dfrac{a-(n-1)|b_{k}|}{a} < \delta_{\epsilon}.$$
Applying this inequality to \eqref{t22} we get
\beq\label{t2}
|s^{(1)}_{k+1}| \le \delta_{\epsilon}|s^{(1)}_{k}|,
\eeq
and since $\delta_{\epsilon} < 1$ we have
\beq\label{sum_conv}
\sum_{k=K}^{N-1}|s^{(1)}_k| < |s^{(1)}_K|\dfrac{1}{1-\delta_{\epsilon}}.
\eeq
By substituting this into \eqref{me} we get
\[
f(x_K) - f(x_N) <  a|x^{(1)}_K| + |s^{(1)}_K|\dfrac{
	\left(n-1\right)b+\epsilon}{1-\delta_{\epsilon}},
\]
%\eeq
which shows $f(x_N)$ is bounded below.
\end{proof}
%%%%%%%%%%%%%%%%%%%%%%%%%%%%
%%%%%%%%%%%%%%%%%%%%%%%%%%%%
Finally, we have the following corollary to Theorems \ref{c_1thm} and \ref{del_thm}. 
Recall that $\gamma_k$ is the scale factor (see \eqref{gammadef}).
\begin{corollary}
\label{gamma_corollary}
If the assumptions required by either Theorem \ref{c_1thm} or \ref{del_thm} hold, then
\beq\label{scale_conv}
\lim_{N \to \infty } \gamma_N = 0
\eeq
and 
$x_N$ converges to a non-optimal point $\bar x$ such that
\beq\label{xbar} 
\bar x = [0, \bar x^{(2)}, \hdots, \bar x^{(n)}]^T.
\eeq
\end{corollary}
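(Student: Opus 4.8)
The plan is to derive everything from the geometric contraction of $|s^{(1)}_k|$ that is already established inside the proofs of Theorems \ref{c_1thm} and \ref{del_thm}. In both regimes one obtains, for all indices $k$ beyond some $K$, a bound of the form $|s^{(1)}_{k+1}| \le \rho\,|s^{(1)}_k|$ with a fixed $\rho < 1$: in the setting of Theorem \ref{c_1thm} this is \eqref{armsk_phi} with $\rho = \psi_{\epsilon}$, and in the setting of Theorem \ref{del_thm} it is \eqref{t2} with $\rho = \delta_{\epsilon}$. Iterating this bound gives $|s^{(1)}_k| \to 0$. Since $\gamma_N = |s^{(1)}_{N-1}|/(2a)$ by \eqref{gammadef}, the limit \eqref{scale_conv} follows at once.

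For the convergence of the iterates I would handle the first coordinate and the remaining coordinates separately. For the first coordinate, relation \eqref{sgnxk_1} gives $|x^{(1)}_k| \le |x^{(1)}_{k-1}| + |x^{(1)}_k| = |s^{(1)}_{k-1}|$, and since the right-hand side tends to zero we conclude $x^{(1)}_N \to 0$, which yields $\bar x^{(1)} = 0$. For the coordinates $i = 2,\ldots,n$ I would use the telescoping identity $x^{(i)}_N = x^{(i)}_0 + \sum_{k=0}^{N-1} s^{(i)}_k$ together with \eqref{bi}, which gives $s^{(i)}_k = b_k s^{(1)}_k$, and Lemma \ref{lemma1}, which gives $|b_k| \le 1/a$. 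Hence $|s^{(i)}_k| \le |s^{(1)}_k|/a$. Because the contraction makes the partial sums $\sum_{k=K}^{N-1}|s^{(1)}_k|$ uniformly bounded (explicitly by \eqref{sk_psi} or \eqref{sum_conv}, the finitely many terms before $K$ contributing only a finite amount), the series $\sum_k |s^{(1)}_k|$ converges, and therefore each series $\sum_k s^{(i)}_k$ converges absolutely. Thus $x^{(i)}_N$ converges to a finite limit $\bar x^{(i)}$ for every $i \ge 2$.

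Combining the two cases yields $x_N \to \bar x = [0,\bar x^{(2)},\ldots,\bar x^{(n)}]^T$, establishing \eqref{xbar}. Finally, since $f$ in \eqref{fdef} is unbounded below it has no minimizer, so the finite limit $\bar x$ is necessarily non-optimal, which completes the argument.

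I do not expect a serious obstacle here: once the geometric contraction is in hand the proof is essentially bookkeeping. The only points that require a little care are ensuring that a uniform contraction factor $\rho < 1$ is genuinely available in \emph{both} regimes, since it arises from two different estimates, and invoking the summability of $\sum_k |s^{(1)}_k|$ correctly so that the absolute convergence of the coordinate series $\sum_k s^{(i)}_k$ is justified rather than merely the boundedness of their partial sums.
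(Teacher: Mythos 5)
Your proposal is correct, and its first half ($\gamma_N \to 0$ via \eqref{gammadef} and $x^{(1)}_N \to 0$ via \eqref{sgnxk_1}) coincides with the paper's proof. Where you genuinely diverge is the convergence of the coordinates $i=2,\ldots,n$: the paper argues via monotonicity, observing from \eqref{pos} and \eqref{dksk_1} that $d^{(i)}_{N-1}<0$, so each $x^{(i)}_N$ is strictly decreasing, and then invoking the boundedness of $f(x_N)$ (the conclusion of Theorems \ref{c_1thm} and \ref{del_thm}) together with $|x^{(1)}_N|\to 0$ to deduce that $\sum_{i=2}^{n} x^{(i)}_N$ is bounded below, whence each decreasing coordinate is bounded below and converges. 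You instead exploit summability: from \eqref{bi} and Lemma \ref{lemma1} you get $|s^{(i)}_k| = |b_k|\,|s^{(1)}_k| \le |s^{(1)}_k|/a$, and the geometric bounds \eqref{sk_psi} or \eqref{sum_conv} make $\sum_k |s^{(1)}_k|$ finite, so each series $\sum_k s^{(i)}_k$ converges absolutely and $x_N$ is Cauchy. Your route is more self-contained --- it never needs the sign structure of $d_k$ nor the boundedness of $f(x_N)$, and it yields convergence of the iterates at a geometric rate --- while the paper's route exposes the structural fact, used in the remark immediately following the corollary, that the trailing coordinates decrease monotonically and by symmetric amounts. Both arguments rest on the same underlying contraction, and your care about a uniform factor $\rho<1$ in each regime and about absolute (not merely bounded) partial sums is exactly the right bookkeeping.
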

\begin{proof}
It is immediate from \eqref{sk_psi} or \eqref{sum_conv} that $|s^{(1)}_N| \to 0$ as $N\to\infty$, 
so from \eqref{gammadef}, we conclude \eqref{scale_conv}.  Also due to \eqref{sgnxk_1} we have  $|x^{(1)}_N| \to 0$, and since  $f(x_N) = a|x^{(1)}_{N }| + \sum^{n-1}_{i=2} x^{(i)}_N$ is bounded below, so is $\sum^{n-1}_{i=2} x^{(i)}_N$.
Due to \eqref{pos} and \eqref {dksk_1}, we have $d^{(i)}_{N-1} < 0$, for $i=2,3,\hdots,n$, so $t_{N-1}d^{(i)}_{N-1} = x^{(i)}_N - x^{(i)}_{N-1} < 0$, and therefore $x^{(i)}_N$ is strictly decreasing as  $N \to \infty$. Hence, $x^{(i)}_N$ converges to a limit $\bar x^{(i)}$.
\end{proof}

Due to the symmetry we discussed earlier, the total decrease along each component, $ x_0^{(i)} - \bar  x^{(i)} =  \sum_{k=0}^{N}s^{(i)}_k$, is the same for  $i=2,3,\hdots,n$.

Finally, note that it follows from Corollary \ref{gamma_corollary} together with \eqref{hkgen} that, when the assumptions hold, the matrix $H_N$ converges to zero. In contrast, when full BFGS is applied to the same problem, it is typically the case that a direction is identified along which $f$ is unbounded below within a few iterations, and that at the final iterate, one eigenvalue of the inverse Hessian is much smaller than the others, with its corresponding eigenvector close to the first coordinate vector, along which $f$ is nonsmooth.

%%%%%%%%%%%%%%%%%%%%%%%%%%%%%%%%%%%%%%%%%%%%%%%%%%%%%%%%%%%%%%%%%%%%%%%%%%%%
%%%%%%%%%%%%%%%%%%%%%%%%%%%%%%%%%%%%%%%%%%%%%%%%%%%%%%%%%%%%%%%%%%%%%%%%%%%%

\section{Experiments} \label{sec:expts}
Our experiments were conducted using the BFGS / L-BFGS \matlab\ code in 
{\sc hanso}.\footnote{www.cs.nyu.edu/overton/software/hanso/} 
This uses the Armijo-Wolfe bracketing line search given in Algorithm 2.
Consequently, according to the results of \S\ref{subsec:specificLS},
scaled memoryless BFGS (L-BFGS with $m=1$) should fail on function \eqref{fdef} when $a$ satisfies Assumption 1: $ 2\sqrt{n-1} \le a$. This is illustrated in
Figure \ref{fig:gl1}, which shows an experiment where we set $a=3$ and $n=2$ and ran scaled memoryless BFGS, the gradient method, and full BFGS, starting from the same randomly generated initial point. We see that scaled memoryless BFGS fails, in the sense that it converges
to a non-optimal point, while the gradient method succeeds, in the sense that it generates iterates with $f(x_k)\downarrow -\infty.$ In contrast to both, full BFGS succeeds in the sense
that it finds a direction along which $f$ is unbounded below in just five iterations. These three
different outcomes respectively illustrate the three different ways that the {\sc hanso} code terminated in our experiments: (i) convergence to a non-optimal point, which is detected when the steplenth upper bound $\beta$ in Algorithm 2
converges to zero indicating that Armijo-Wolfe points exist, but the line search terminates without finding one due to rounding errors; (ii) divergence of the $f(x_k)$ to $-\infty$ although the line search
always finds Armijo-Wolfe steplengths; and (iii) generation of a direction along which $f$ is
apparently unbounded below, which is detected when $\beta$ in Algorithm 2 remains equal to its initial value of $\infty$ while the lower bound $\alpha$ is repeatedly doubled until a limit is exceeded.\footnote{Although in
principle the code would alternatively terminate if a termination tolerance was met or an upper bound
on the number of iterations was exceeded, we set these so small and large respectively that
they virtually never caused termination.} In the results reported below for function \eqref{fdef}, termination (i) is considered a failure while terminations (ii) and (iii) are considered successes. We note that,
provided $\sqrt{n-1}\le a$, the gradient method can never result in termination (iii), and
whether it results in termination (i) or (ii) depends on the Armijo parameter \cite{AO18}. In our experiments, L-BFGS, with or without scaling and with one or more updates,
always resulted in termination (i) or (iii), while full BFGS 
% (with or without scaling the initial inverse Hessian approximation) 
invariably resulted in termination (iii) (as we know it must from the results in \cite{XW17}).

Although the proof of Theorem~\ref{conv} does require Assumption 1 
we observed that $ \sqrt{3(n-1)} \le a$ suffices for $\{|b_k|\}$ and consequently  $|d_k|/\|d_k\|_2$ to converge.
In Figure \ref{fig:gl2} we repeat the same experiment with $a=\sqrt{3}$ and $n=2$, showing that scaled memoryless BFGS still fails.
In this case, as noted in Section \ref{sec:theory}, the normalized direction is the same as the normalized direction generated by 
the gradient method, but unlike in the gradient method, the magnitude of the directions $d_k$ converge to zero
so scaled memoryless BFGS fails.

However, if we set $a$ to $\sqrt{3}-0.001$ the method succeeds. This is demonstrated in Figure \ref{fig:gl3}: observe
that although one at first has the impression that $x_k$ is converging to a non-optimal point, a search direction is generated
on which $f$ is unbounded below ``at the last minute".

\begin{figure} 
    \centering
    \includegraphics[scale=0.5]{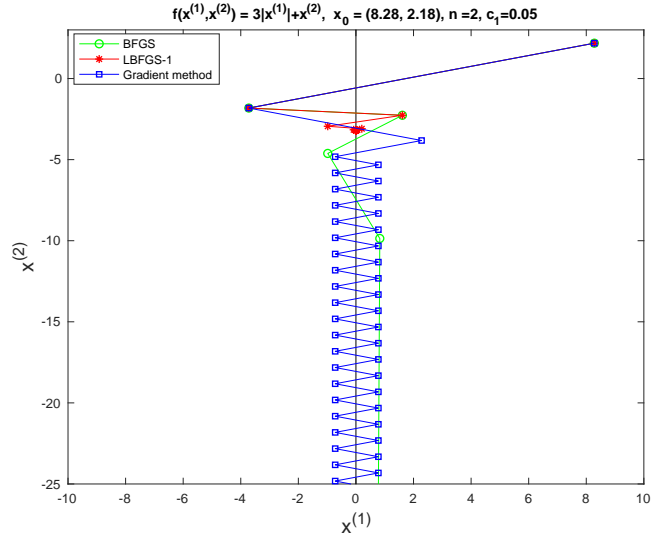} 
    \caption{Full BFGS (green circles), scaled memoryless BFGS (red asterisks) and the gradient method (blue squares) applied to the function \eqref{fdef} defined by $a=3$ and $n=2$. Scaled memoryless BFGS fails while full BFGS and the gradient method succeed.}
    \label{fig:gl1}
\end{figure}

\begin{figure} 
    \centering
    \includegraphics[scale=0.5]{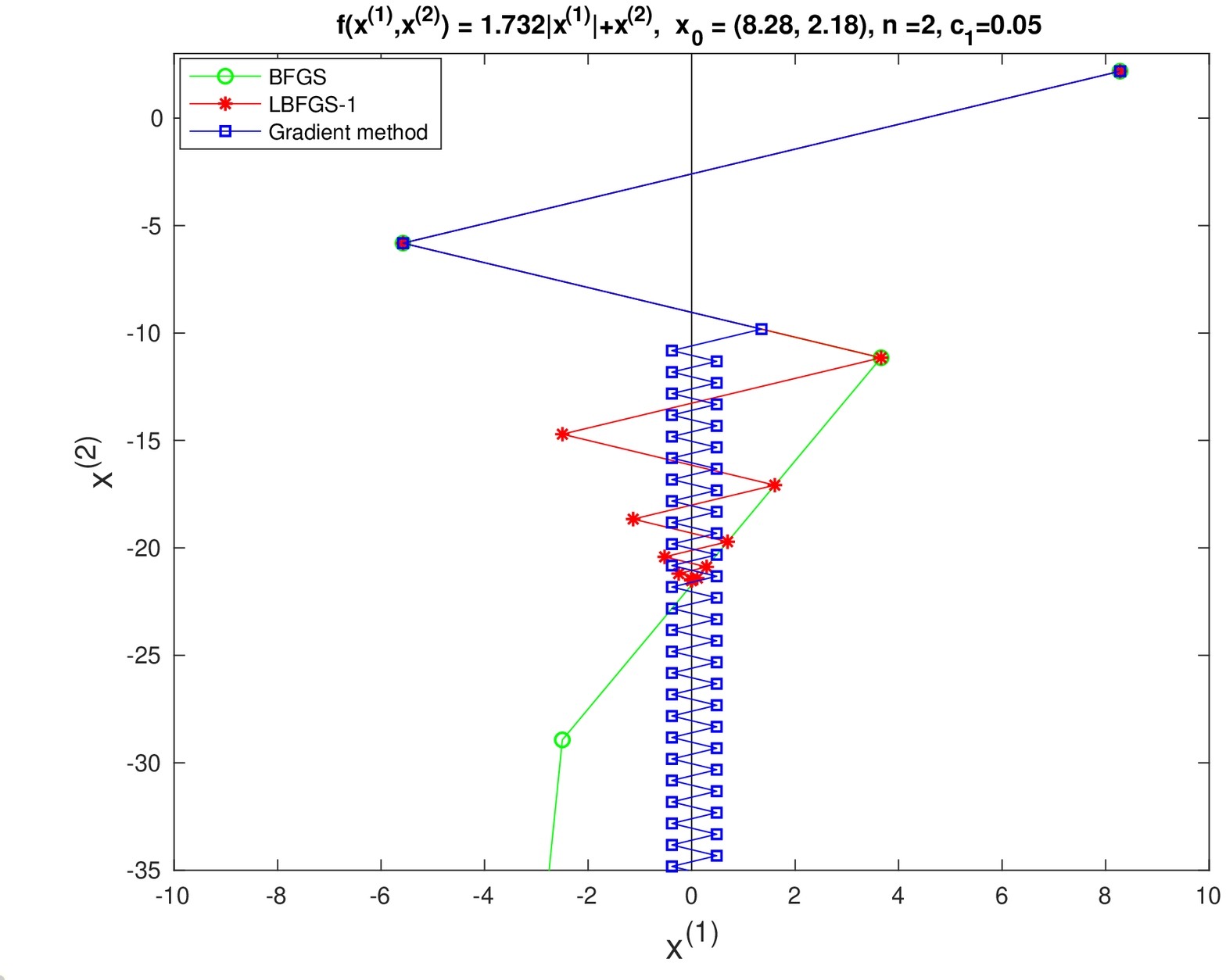} 
    \caption{Full BFGS (green circles), scaled memoryless BFGS (red  asterisks) and the gradient method (blue squares) applied to
    the function \eqref{fdef} defined by $a=\sqrt{3}$ and $n=2$.
   Scaled memoryless BFGS fails while full BFGS and the gradient method succeed.}
    \label{fig:gl2}
\end{figure}

\begin{figure} 
    \centering
    \includegraphics[scale=0.5]{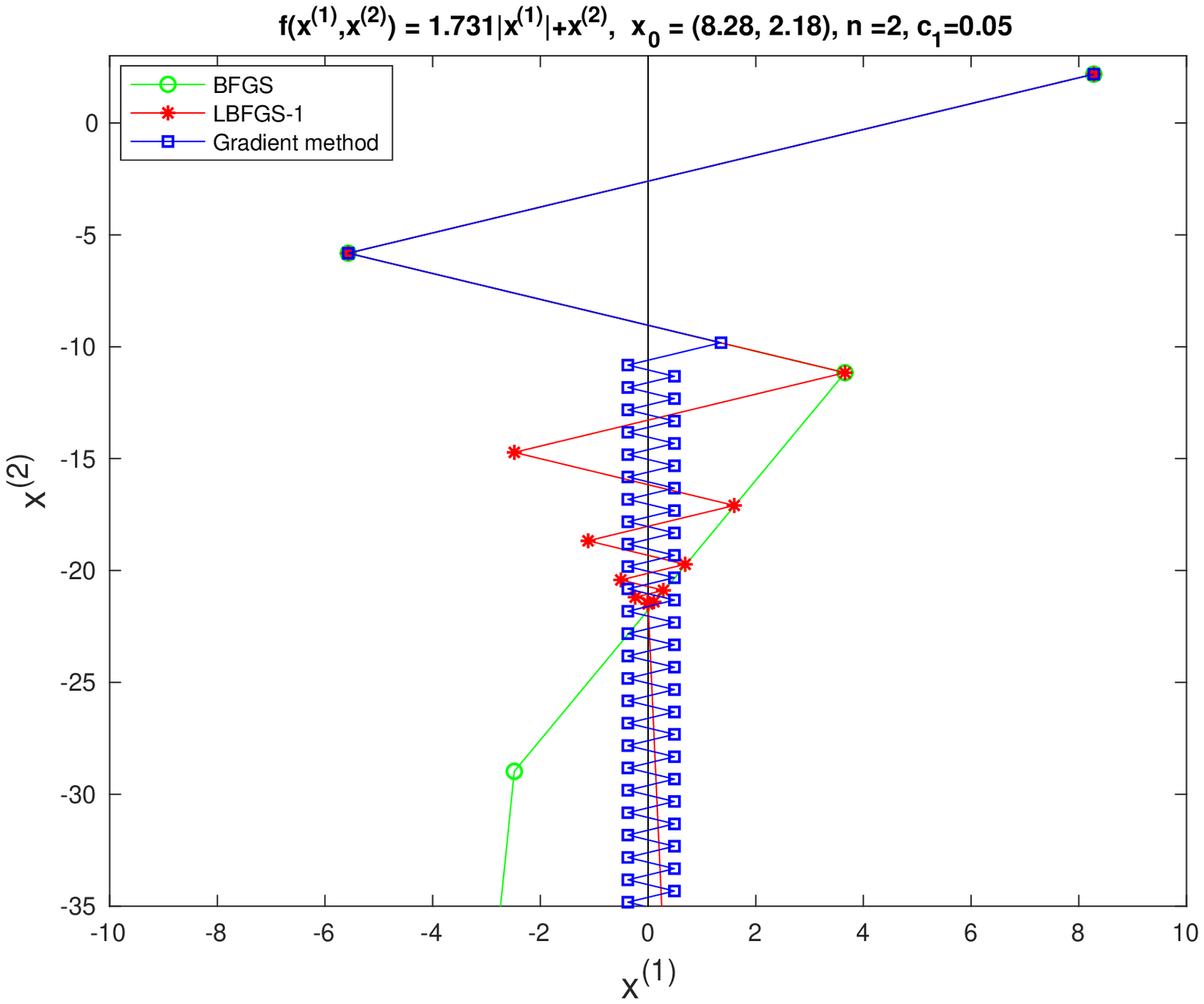} 
    \caption{Full BFGS (green circles), scaled memoryless BFGS (red  asterisks) and the gradient method (blue squares) applied to
    the function \eqref{fdef} defined by $a=\sqrt{3}-0.001$ and $n=2$.
    All methods succeed.}
    \label{fig:gl3}
\end{figure}

% \footnote{\magenta{Does this last case ever happen? It's the only way the gradient method can succeed. Note that there is no special value for the output parameter "info" in my BFGS code for this last case.} No it doesn't. That's  why I had moved that part about our observation in practice for $\sqrt{3(n-1)} < a<2\sqrt{n-1}$ right after Thm. 3. To indicate that there is no in between mode.  Either the method finds the direction of unboundedness when $a< \sqrt{3(n-1)}  $ or it conveges when $a \ge \sqrt{3(n-1)} $, but we couldn't prove this for $\sqrt{3(n-1)} < a<2\sqrt{n-1}$.} 

Extensive additional experiments verify that the condition $\sqrt{3(n-1)}\le a$, as opposed to Assumption~1, is sufficient for failure, as illustrated by the magenta asterisks in Figure \ref{fig:30_5k}. Starting from 5000 random points generated from the normal distribution, 
we called
scaled memoryless BFGS to minimize function \eqref{fdef} with $n=30$ and for values of $a$ ranging from $9.317$ to $9.337$,  
since for $n=30$,  $\sqrt{3(n-1)} \approx 9.327$. We see that for $9.327 \le a$  the failure rate is 1 (100\%), 
while for $9.32 > a$ the failure rate is 0.
In comparison to a similar experiment in \cite{AO18} for the gradient method, the transition from failure rate  0 to 1 is quite sharp here. 
This might be explained by the fact that the gradient method fails because the steplength $t_k\rightarrow 0$, whereas for scaled
memoryless BFGS, $t_k$ does not converge to zero; it is the scale $\gamma_k$ and consequently the norm of $d_k$ which converges to zero.
Hence, rounding error prevents the observation of a sharp transition in the results for the gradient method, as explained in \cite{AO18}; 
by comparison, rounding error plays a less significant role in the experiments reported here. 

The cyan squares in Figure \ref{fig:30_5k} show 
the results from the same experiment for memoryless BFGS \emph{without} scaling, i.e., with $H_k^0 = I$ instead of \eqref{Hk0def}, using
the same 5000 initial points. In this case, the method is successful regardless of the value of $a$. 
\begin{figure} 
    \centering
    \includegraphics[scale=0.55]{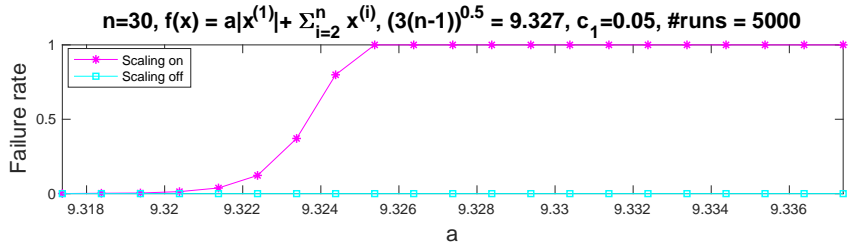} 
    \caption{The failure rate of memoryless BFGS with scaling (magenta asterisks) and without scaling (cyan squares) applied to function \eqref{fdef} with $n=30$ and 21 different values of $a$, initiating the method from  5000 random points. With scaling, the failure rate is 1  for $9.327 \le a$. Without scaling, the failure rate is 0 regardless of $a$.} 
    \label{fig:30_5k}
\end{figure}
\begin{comment}
\magenta{Azam: in the title, \#nruns should be \#runs to be consistent. } Done.
\magenta{Also, I noticed the bold font in some
	figures but not the first few.} Done, all bold font now.
\magenta{ The titles in Figs 2 to 4 are longer, but the value of $a$ is stated twice so that's redundant. It's best to be as consistent as possible.} Done.
\end{comment}

 Experiments suggest that
 the theoretical results we presented for scaled L-BFGS with only one update might extend, although undoubtedly
 in a far more complicated form, to any number of updates. In Figure~\ref{fig:sdb1} we show results of experiments with a variety of choices of $m$ and $a$, running scaled L-BFGS-$m$ (L-BFGS with $m$ updates) initiated from 1000 randomly generated points for each pair ($m$,$a$). The horizontal axis shows $m$, the number of updates, while the vertical
 axis shows the observed failure rate.  We set the Armijo parameter $c_1=0.01$ and $n=4$, so that $\sqrt{3(n-1)} = 3$, and
 show results for values of $a$ ranging from 2.99 to 300. Figure \ref{fig:sdb2} shows results from the same experiment except that $c_1=0.001$.
The results shown in Figure \ref{fig:sdb3} use a different objective function; instead of \eqref{fdef}, we define $f(x) = a|b_{1}^Tx|+  b_{2}^Tx$, where $b_1$ and $b_2$ were each chosen as a random vector in $\R^{10}$ and normalized to have length one. The Armijo parameter was set to $c_1=0.01$.
 In all of Figures \ref{fig:sdb1}, \ref{fig:sdb2} and \ref{fig:sdb3} we observe that as $a$ gets larger for a fixed $m$, the failure rate increases. 
On the other hand, as $m$ gets larger for a fixed $a$, the failure rate decreases.
Comparing Figures \ref{fig:sdb1} and \ref{fig:sdb2}, we see that the results 
do not demonstrate a significant dependence  on the Armijo parameter $c_1$; in particular, 
as we established in Section \ref{subsec:specificLS},
there is no dependence on $c_1$
when $m=1$ because we are using the line search in Algorithm~2.
However, we do observe small differences for the larger values of $m$,
where the failure rate is slightly higher for the larger Armijo parameter. This is consistent with the theoretical
results in \S \ref{subsec:armijo} as well as those in \cite{AO18}, where, if $a$ is relatively large, then to avoid
failure $c_1$ should not be too large.
\begin{figure} 
	\centering
	\includegraphics[scale=0.5]{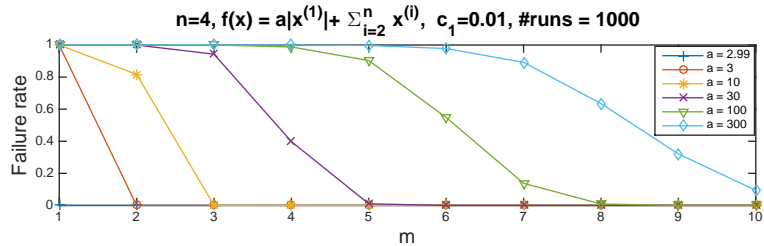}
	\caption{The failure rate for each scaled L-BFGS-$m$, where the number of updates
		$m$ ranges from 1 to 10, applied to function \eqref{fdef} with $a = 2.99$ (blue pluses), $a = 3$ (orange circles), $a = 10$ (yellow asterisks), $a = 30$ (purple crosses), $a = 100$ (green triangles) and finally $a = 300$ (cyan diamonds), with
		$c_1=0.01$ and $n=4$ and hence $\sqrt{3(n-1)} = 3$, and with
		each experiment initiated from  1000 random points.}
	\label{fig:sdb1}
\end{figure}
\begin{figure} 
	\centering
	\includegraphics[scale=0.5]{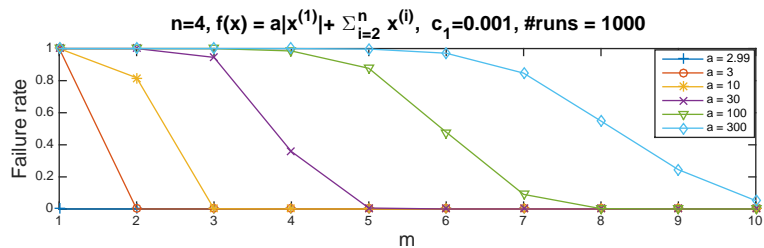} 
	\caption{% \textbf{Experiment with a different value of the Armijo parameter.} 
	The same experiment as in Figure \ref{fig:sdb1} except that $c_1=0.001$.}
	\label{fig:sdb2}
\end{figure}
\begin{figure} 
	\centering
	\includegraphics[scale=0.5]{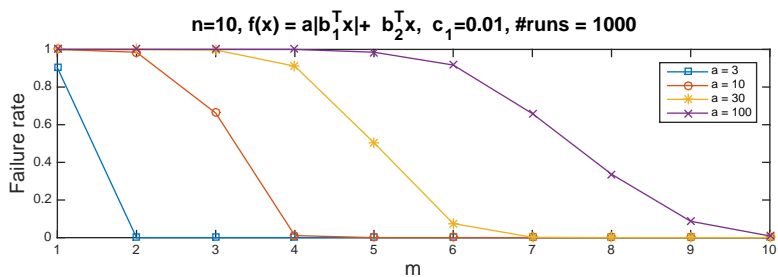}
	\caption{
	The same experiment as in Figure \ref{fig:sdb1} except that $f(x) = a|b_{1}^Tx|+  b_{2}^Tx$ where $b_1, b_2 \in\R^{10}$ were chosen randomly.}
	\label{fig:sdb3}
\end{figure}

Finally, we conducted experiments with a more general class of piecewise linear max functions defined as
	\beq\label{pwl}
	f(x) = \max_{i=1,\ldots p}~ \{b_i^T x - r_i\},
	\eeq
	where $b_1,...,b_p$ are randomly generated vectors in $\R^n$ and $r_1,...,r_p$ are random scalars. These quantities were fixed for the experiment reported here but similar results were obtained for other choices. 
We set $n=10$ and $p=50$, obtaining a problem that, unlike those studied above, is bounded below.  Consequently, all runs result in termination (i), and we evaluated how successful they were by comparing the final function
value to the optimal value $f_*$ that we obtained via linear programming using {\sc mosek}\footnote{https://www.mosek.com/} with the tolerance set to $10^{-14}$.  Figure \ref{fig:pwl2} shows the median accuracy obtained by L-BFGS-$m$, for $m=1,\hdots,10$, with and without scaling. L-BFGS with scaling does not achieve a median accuracy better than $10^{-2}$, even when $m=10$. Without scaling, the accuracy of the results improves substantially, to a median accuracy of about $10^{-9}$ with $m=9$. Strangely, for this problem, and many different instances of it that we tried, L-BFGS-10 performs worse than L-BFGS-9. The median accuracy of the solution found by full BFGS (with or without scaling the initial inverse Hessian approximation) is significantly better: about $10^{-14}$. 

\begin{figure} 
	\centering
	\includegraphics[scale=0.5]{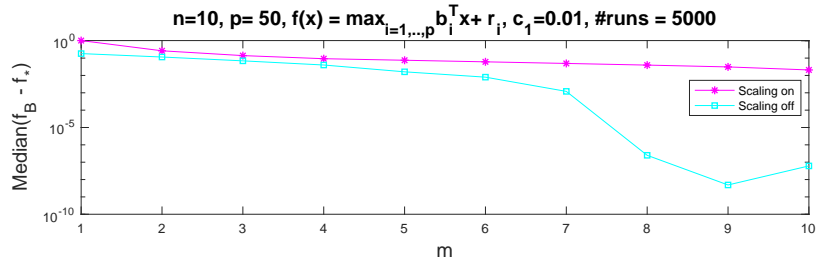} %N10p50PWLt3 N10p50PWLt4
	\caption{
	Median accuracy of the solution $f_B$ found by L-BFGS-$m$ with $m=1,\hdots,10$ for the piecewise linear function defined in \eqref{pwl}, with $n=10$ and $p=50$, compared with the value $f_*$ obtained from the linear optimizer in {\sc mosek} using high accuracy. Scaled L-BFGS-$m$ does not obtain accurate solutions even with $m=10$. In contrast, with scaling off, L-BFGS-9 obtains a median accuracy of about $10^{-9}.$}% optim{\sc }
	\label{fig:pwl2} 
\end{figure}

\section{Concluding Remarks}\label{sec:conclude}

We have given the first analysis of a variant of L-BFGS applied to a nonsmooth function, showing that the scaled version of
memoryless BFGS (L-BFGS with just one update) applied to \eqref{fdef}
generates iterates converging to a non-optimal point under simple conditions.
One of these conditions applies to the method with any Armijo-Wolfe line search and depends on the Armijo parameter. The
other condition applies to the method using a standard Armijo-Wolfe bracketing line search and does not depend on the Armijo
parameter. Experiments suggest that extended results likely hold for L-BFGS with more than one update, though clearly a generalized analysis would be much more complicated.

We do not know
 whether L-BFGS without scaling applied to the same function can converge to a non-optimal point,
but numerical experiments suggest that this cannot happen.
Furthermore, we observed that L-BFGS without scaling obtains significantly more accurate solutions than L-BFGS with scaling when applied to a more general piecewise linear function that is bounded below. Nonetheless, it remains an open question as to whether scaling is generally inadvisable when applying L-BFGS to 
nonsmooth functions, despite its apparent advantage for smooth optimization.

{\bf Acknowledgments.} Many thanks to Margaret H.~Wright for arranging financial support 
for the first author from the Simons Foundation. Thanks also to the anonymous referees for
carefully reading the paper and suggesting several improvements.

%%%%%%%%%%%%%%%%%%%%%%%%%%%%%%%%%%%%%%%%%%%%
%%%%%%%%%%%%%%%%%%%%%%%%%%%%%%%%%%%%%%%%%%%%
\appendix 
\section{Proof of Lemma \ref{lemma1}} \label{appendA}
Suppose $\sqrt{3(n-1)} \le a$. Using a change of variable such that $\beta_k = b_k$ when $k$ is even, and $\beta_k = -b_k$ when $k$ is odd, \eqref{bk} becomes 
\beq\label{anbk}
\beta_k =  \dfrac{1+ (n-1)\beta_{k-1}^2}{a-(n-1)\beta_{k-1}} -\beta_{k-1}.
\eeq

From \eqref{b0} we have $\beta_0 = 1/a$. Using induction we prove that $0 <\beta_k \le 1/a$. This is clearly true for $k=0$. 
Suppose we have $0 <\beta_{k-1} \le 1/a $. Hence
$$ \beta_{k-1} < \dfrac{1}{a-(n-1)\beta_{k-1}}< \dfrac{1+(n-1)\beta_{k-1}^2}{a-(n-1)\beta_{k-1}}, $$
so, dropping the middle term and moving $\beta_{k-1}$ to the R.H.S., we get exactly the definition of $\beta_k$ according to \eqref{anbk}. So, we have $0 < \beta_k$. Next, starting from  $\sqrt{3(n-1)} \le a$, we show that $\beta_k \le 1/a$:
\begin{align*}
&  \dfrac{3(n-1)}{a} \le a \RA   \\
& \dfrac{(n-1)}{a} +2(n-1)\beta_{k-1} \le a \RA \\
&\dfrac{a^2+n-1}{a} \le 2(a - (n-1)\beta_{k-1}) \RA \\
&\dfrac{a^2+n-1}{a(a - (n-1)\beta_{k-1})} \le 2.
\end{align*}
Multiplying both sides by $\beta_{k-1}$ we get
$$\dfrac{a\beta_{k-1}+1}{a - (n-1)\beta_{k-1}} -\dfrac{1}{a} \le 2\beta_{k-1},$$
and finally by moving $1/a$ to the right and $2\beta_{k-1}$ to the left we get
$$ \dfrac{1+(n-1)\beta_{k-1}^2}{a - (n-1)\beta_{k-1}} -\beta_{k-1} \le \dfrac{1}{a}.$$
The L.H.S.\ is $\beta_k $ as it's defined in \eqref{anbk}, so $ \beta_k \le 1/a$. Recalling the change of variable in the beginning of the proof it follows that $\beta_k = |b_k|$. So, from \eqref{anbk} we get \eqref{abk}.

%%%%%%%%%%%%%%%%%%%%%%%%%%%%%%%%%%%%%%%%%%%%
\section{Proof of Theorem \ref{conv}}
\label{appendB}
We continue to use the same change of variable as before, that is $\beta_k = b_k$ when $k$ is even, and $\beta_k = -b_k$ when $k$ is odd. In this way, \eqref{anbk} is equivalent to \eqref{abk}, and we prove that if $ 2\sqrt{n-1} \le a$, then
 $\{\beta_k\}$ converges. 
From a little rearrangement in \eqref{anbk} we can easily get
\beq \label{eq1}
a(\beta_k+\beta_{k-1}) =   1+ 2(n-1)\beta_{k-1}^2 + (n-1)\beta_{k-1}\beta_k ,  
\eeq
and by moving $(n-1)\beta_{k-1}\beta_k$ to the left and adding 1 to both sides we get
\beq \label{eq2}
 a(\beta_k+\beta_{k-1})-(n-1)\beta_{k-1}\beta_k+1 =  2\Big(1+(n-1)\beta_{k-1}^2\Big) .  
\eeq
For further simplification we define  

\beq\label{rho}
\rho_k= \dfrac{1+(n-1)\beta_k^2}{a-(n-1)\beta_k},
\eeq
so we can rewrite \eqref{anbk} as
\beq\label{beta_rho}
 \beta_{k+1} = \rho_k -  \beta_k.
\eeq
By applying  \eqref{beta_rho} recursively we obtain
\beq\label{k2k}
\beta_{k+1} - \beta_{k-1}=   \rho_{k} - \rho_{k-1}.
\eeq
Note that from \eqref{rho} we have
\begin{align*}
&\rho_k - \rho_{k-1}  =\dfrac{1+(n-1)\beta_k^2 }{a-(n-1)\beta_k}  -\dfrac{ 1+(n-1)\beta_{k-1}^2}{a-(n-1)\beta_{k-1}}\\
&=\dfrac{\Big( 1+(n-1)\beta_k^2 \Big)\Big(a-(n-1)\beta_{k-1}\Big)  -\Big( 1+(n-1)\beta_{k-1}^2 \Big)\Big(a-(n-1)\beta_{k}\Big)  }   {\Big(a-(n-1)\beta_k\Big)\Big(a-(n-1)\beta_{k-1}\Big)}\\
&=\dfrac{(\beta_k-\beta_{k-1})(n-1)\Big(a(\beta_k+\beta_{k-1}) -(n-1)\beta_{k-1}\beta_k +1\Big) }{\Big(a-(n-1)\beta_k\Big)\Big(a-(n-1)\beta_{k-1}\Big)}. \numberthis \label{eqPhi}  
\end{align*} 
The last factor in the numerator is the L.H.S.\  in \eqref{eq2}, so
\beq
\rho_k - \rho_{k-1}  = \dfrac{(\beta_k-\beta_{k-1})(n-1)2\Big(1+ (n-1)\beta_{k-1}^2\Big) }{\Big(a-(n-1)\beta_k\Big)\Big(a-(n-1)\beta_{k-1}\Big)}.
\eeq
Hence, since all of the factors in this product except $(\beta_{k}-\beta_{k-1})$ are known to be positive, we have
\beq\label{mon}
(\rho_{k}-\rho_{k-1})(\beta_{k}-\beta_{k-1})\ge 0.
\eeq
Putting \eqref{k2k} and \eqref{mon} together we conclude 
\beq\label{inc}
( \beta_{k+1} - \beta_{k-1})( \beta_{k}-\beta_{k-1} ) \ge 0.
\eeq

As the next step we will show that 
\beq\label{inc_dec}
( \beta_{k+1} - \beta_{k})( \beta_{k}-\beta_{k-1} ) \le 0.
\eeq
Since $a \ge 2\sqrt{n-1}$ and using $1/a \ge \beta_{k-1}$ we get 
\begin{align*}
&\Big(a^2 -4(n-1)\Big) \Big(a^2 +(n-1)\Big)\ge 0 \RA\\
& a^2 -3(n-1) \ge\dfrac{4(n-1)^2}{a^2}\RA\\
&a^2 -3(n-1) \ge 4(n-1)^2\beta_{k-1}^2 \RA\\
&a^2 -3(n-1) -4(n-1)^2\beta_{k-1}^2 \ge 0.
\end{align*}
By adding and deducting $2(n-1)^2\beta_k\beta_{k-1}$ to the L.H.S.\  above we get
\[
a^2-2(n-1)\Big(1+2(n-1)\beta_{k-1}^2+(n-1)\beta_{k-1}\beta_k\Big) +2(n-1)^2\beta_k\beta_{k-1} -(n-1)\ge 0.
\]  
By combining this with  \eqref{eq1}  we get
$$ a^2-2(n-1)a(\beta_k+\beta_{k-1}) +2(n-1)^2\beta_k\beta_{k-1} -(n-1)\ge 0.  $$
By moving some of the terms to the R.H.S.\ and factorizing the L.H.S.\  we get
$$
\Big(a-(n-1)\beta_k\Big)\Big(a-(n-1)\beta_{k-1}\Big) \ge  a(n-1)(\beta_k+\beta_{k-1}) -(n-1)^2\beta_k\beta_{k-1} +(n-1),
$$
which we can write as  
\beq \label{ineq2}
1 \ge  \dfrac{(n-1)\Big(a(\beta_k+\beta_{k-1}) -(n-1)\beta_k\beta_{k-1} +1\Big)}{\Big(a-(n-1)\beta_k\Big)\Big(a-(n-1)\beta_{k-1}\Big)}.
\eeq
Now, suppose $\beta_k -\beta_{k-1} \ge 0$. Multiplying both sides of the inequality \eqref{ineq2} by $\beta_k -\beta_{k-1}$, according to \eqref{eqPhi} we get
$$ \beta_k-\beta_{k-1}  \ge \rho_k - \rho_{k-1},$$
so, 
\[
\rho_{k-1} -\beta_{k-1}  \ge \rho_k - \beta_k
\]
which means that via \eqref{beta_rho} we have shown $ \beta_k \ge \beta_{k+1}$. Alternatively, if we had $\beta_k -\beta_{k-1} \le 0$ above, then we would get $ \beta_k \le \beta_{k+1}$. Hence, we always have $(\beta_{k+1}-\beta_k)(\beta_{k} -\beta_{k-1}) \le 0$, which is exactly inequality \eqref{inc_dec}.

Since  we start with $\beta_0 = 1/a$, according to Lemma \ref{lemma1} we have $\beta_1 \le \beta_0$. Using \eqref{inc_dec} inductively we get 
\[\beta_1 - \beta_0 \le 0,~~ 0 \le \beta_2-\beta_1,~~ \beta_3 - \beta_2 \le 0, \hdots\]
and from applying \eqref{inc} to each one of these inequalities we conclude
\[\beta_2 - \beta_0 \le 0,~~ 0 \le \beta_3-\beta_1,~~ \beta_4 - \beta_2 \le 0, \hdots\]
which shows that we can split $\{\beta_k\}$ into two separate monotonically decreasing and  increasing subsequences:
\begin{align*} 
0 &< \hdots \beta_4 \le \beta_2 \le \beta_0=1/a, \\
0 &< \beta_1 \le \beta_3 \le \beta_5 \hdots <1/a.
\end{align*}
By the bounded monotone convergence theorem we conclude that each one of these subsequences converge, i.e.\\%\mathrm{~~~~(**)} 
\[\lim_{k \to \infty} |\beta_{k+2} - \beta_k | = 0, \]
and recalling \eqref{k2k} we get
\[\lim_{k \to \infty} |\rho_{k+1} - \rho_k | = 0. \]
On the other hand, looking at the equality in \eqref{eqPhi} we know that except $(\beta_{k+1} - \beta_k)$ all the 
factors in the numerator and denominator are bounded away from zero. So therefore we must have 
\[\lim_{k \to \infty} |\beta_{k+1} - \beta_k | = 0, \]
and hence, since the even and odd sequences both converge, they must have the same limit.
Using the definition of $\beta_{k+1}$ in \eqref{anbk} we get 
\[ \lim_{k \to \infty}  \left|\dfrac{1+ (n-1)\beta_k^2}{a-(n-1)\beta_k} -2\beta_k \right|  = 0.
\] 
Since the denominator is bounded away from zero we must have
\[ \lim_{k \to \infty}  3(n-1)\beta_k^2 -2a\beta_k + 1  = 0.
\] 
The two roots of the limiting quadratic equation are $$\dfrac{a \pm \sqrt{a^2-3(n-1)}}{3(n-1)}.$$ 
The smaller root is $b$ as defined in \eqref{b} and the larger root % we have $\dfrac{a + \sqrt{a^2-3(n-1)}}{3(n-1)} > 
is greater than $1/a $, which according to Lemma \ref{lemma1} is not possible. Hence, 
$$ \lim_{k \to \infty} \beta_k =  \lim_{k \to \infty} |b_k|= b. $$

\bibliographystyle{alpha}
\bibliography{refs_mo}
\end{document}